\newtheorem{thm}{Theorem}[section]
\newtheorem{cor}[thm]{Corollary}
\newtheorem{lemma}[thm]{Lemma}
\newtheorem{conj}[thm]{Conjecture}
\theoremstyle{definition}
\newtheorem*{Def}{Definition}
\theoremstyle{remark}
\DeclareMathOperator{\Per}{per_1}
\DeclareMathOperator{\sPer}{per_s}
\DeclareMathOperator{\NPer}{per}
\DeclareMathOperator{\Hull}{Hull}
\DeclareMathOperator{\Zer}{Zero}
\DeclareMathOperator{\Supp}{supp}
\DeclareMathOperator{\CongN}{\equiv_{\it n}\,}
\newcommand{\floor}[1]{\left\lfloor #1 \right\rfloor}
\newcommand{\sym}{\mathcal S}
\newcommand{\corp}{\mathcal P}
\newcommand{\corh}{\mathcal H}
\renewcommand{\geq}{\geqslant}
\renewcommand{\ge}{\geqslant}
\renewcommand{\le}{\leqslant}
\def\sref#1{\S$\ref{#1}$}
\def\lref#1{Lemma~$\ref{#1}$}
\def\tref#1{Theorem~$\ref{#1}$}
\def\cyref#1{Corollary~$\ref{#1}$}
\def\cjref#1{Conjecture~$\ref{#1}$}
\begin{document}

\title{Multidimensional permanents of polystochastic matrices}

\author{Billy Child and Ian M. Wanless}

\thanks{
School of Mathematics, Monash University, Vic 3800, Australia.
{\tt william.child@monash.edu, ian.wanless@monash.edu}.
Research supported by Australian Research Council grant DP150100506.
}

\maketitle

\begin{abstract}
A $d$-dimensional matrix is called \emph{$1$-polystochastic} if it is
non-negative and the sum over each line equals~$1$. Such a matrix that
has a single $1$ in each line and zeros elsewhere is called a
\emph{$1$-permutation} matrix. A \emph{diagonal} of a $d$-dimensional
matrix of order $n$ is a choice of $n$ elements, no two in the same
hyperplane. The \emph{permanent} of a $d$-dimensional matrix is the
sum over the diagonals of the product of the elements within the
diagonal.

For a given order $n$ and dimension $d$, the set of $1$-polystochastic
matrices forms a convex polytope that includes the $1$-permutation
matrices within its set of vertices. For even $n$ and odd $d$, we give
a construction for a class of $1$-permutation matrices with zero
permanent. Consequently, we show that the set of $1$-polystochastic
matrices with zero permanent contains at least $n^{n^{3/2}(1/2-o(1))}$
$1$-permutation matrices and contains a polytope of dimension at least
$cn^{3/2}$ for fixed $c,d$ and even $n\to\infty$.  We also provide
counterexamples to a conjecture by Taranenko \cite{TaranenkoBig} about
the location of local extrema of the permanent.

For odd $d$, we give a construction of $1$-permutation matrices that
decompose into a convex linear sum of positive diagonals. These combine
with a theorem of Taranenko \cite{TaranenkoBig} to provide counterexamples
to a conjecture by Dow and Gibson \cite{GibsonDow} generalising
van der Waerden's conjecture to higher dimensions.

\bigskip\noindent
Keywords: permanent; polystochastic; Birkhoff polytope; transversal; hypercube\\
Mathematics Subject Classification: 15A15

\end{abstract}

\section{Introduction}

Let $A=[A(i,j)]$ be a square matrix of order $n$. The permanent of $A$ is the unsigned determinant: 
\[\NPer(A)=\sum_{\sigma\in\sym_n}\prod_{i=1}^nA(i,\sigma(i)),\]
where the sum is over all permutations in the symmetric group $\sym_n$. 

A non-negative matrix $A$ is \emph{doubly stochastic} if the sum along every row and column of $A$ equals $1$. One of the most celebrated results in the history of permanents is this:

\begin{thm}\label{t:vdW}
Amongst the doubly stochastic matrices of order $n$, the minimum value of the permanent is $n!n^{-n}$ and it is obtained uniquely at $n^{-1}J_n$, where $J_n$ denotes the square matrix of order $n$ with a $1$ in every entry.
\end{thm}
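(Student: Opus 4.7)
The plan is to follow the celebrated Egorychev--Falikman proof of van der Waerden's conjecture, which reduces the statement to a mixed-permanent inequality of Alexandrov type. The strategy has three stages: produce a minimizer by compactness; extract first-order information via variational perturbations within the Birkhoff polytope; then invoke Alexandrov's inequality to pin down the minimizer as $n^{-1}J_n$.

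First, the polytope $\Omega_n$ of $n\times n$ doubly stochastic matrices is compact and $\NPer$ is continuous, so a minimizer $A$ exists; direct evaluation gives $\NPer(n^{-1}J_n)=n!n^{-n}$, so it remains to show $\NPer(A)\geq n!n^{-n}$ with equality only at $A=n^{-1}J_n$. Next, I would exploit multilinearity of $\NPer$ in the columns together with the fact that the perturbations $t(E_{ij}+E_{kl}-E_{il}-E_{kj})$ lie in the kernel of the row- and column-sum maps. Vanishing of the directional derivative at the minimizer gives
\[
\NPer(A(i|j))+\NPer(A(k|l))=\NPer(A(i|l))+\NPer(A(k|j))
\]
whenever the four relevant entries of $A$ are positive. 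Combining this with the Laplace expansion $\NPer(A)=\sum_i A(i,j)\NPer(A(i|j))$ forces $\NPer(A(i|j))=\NPer(A)$ at every $(i,j)$ in the support of $A$. A separate technical step rules out zero entries of $A$ (roughly: fully indecomposable minimizers can be handled by pushing mass through a positive diagonal, partly decomposable cases reduce inductively), so one may assume $A>0$ entrywise.

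Finally, invoke Alexandrov's inequality for permanents: for the bilinear form $\Phi(v,w):=\NPer(v,w,a_3,\ldots,a_n)$ obtained from $A$ by varying its first two columns,
\[
\Phi(v,w)^2 \geq \Phi(v,v)\,\Phi(w,w),
\]
with equality iff $v$ and $w$ are proportional (assuming the fixed columns are strictly positive). Taking $v=a_1$ and $w=a_2$, one has $\Phi(v,w)=\NPer(A)$ trivially, and by the constancy of subpermanents Laplace expansion gives $\Phi(v,v)=\Phi(w,w)=\NPer(A)$ as well. Thus equality holds in Alexandrov's inequality, forcing $a_1$ and $a_2$ to be proportional; since both columns sum to $1$, they coincide. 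Iterating over all column pairs yields $A=n^{-1}J_n$ and $\NPer(A)=n!n^{-n}$.

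The hardest part is Alexandrov's mixed-permanent inequality itself---especially the equality case---which is typically proved by an intricate induction via hyperbolic polynomial theory (or, in modern language, via Lorentzian/stable polynomials). A secondary difficulty is the positivity argument, since a naive perturbation to fill in a zero entry can leave the polytope $\Omega_n$, and one must show that any genuine boundary minimizer can be strictly decreased.
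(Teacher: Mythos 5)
The paper does not prove this statement at all: Theorem~\ref{t:vdW} is quoted as the classical Egorychev--Falikman resolution of van der Waerden's conjecture, with the proofs residing in \cite{Egorychev} and \cite{Falikman}. So there is no in-paper argument to compare against; your proposal has to be judged on its own.

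What you have written is a faithful outline of the standard Egorychev-style proof, but it is an outline, not a proof. The two components you yourself defer --- Alexandrov's mixed-permanent inequality together with its equality case, and the argument that a minimizer on the boundary of the Birkhoff polytope can be improved --- are not ``technical steps''; they are essentially the entire content of the theorem, and each took substantial new ideas historically. Beyond that, there is a genuine circularity in the order of your steps. Your variational lemma gives $\NPer(A(i|j))=\NPer(A)$ only for $(i,j)$ in the support of $A$, but the Laplace expansion you need for $\Phi(v,v)=\NPer(A)$ is $\Phi(a_1,a_1)=\sum_i A(i,1)\,\NPer(A(i|2))$, which requires constancy of $\NPer(A(i|2))$ over all $i$ with $A(i,1)>0$ --- a different index set from the support condition $A(i,2)>0$. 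This mismatch disappears only once $A>0$ entrywise is known, yet in the actual proof the positivity of the minimizer (more precisely, the extension of $\NPer(A(i|j))=\NPer(A)$ from the support to all $(i,j)$) is itself obtained \emph{using} Alexandrov's inequality, after first establishing full indecomposability via London's lemma. As written, your sketch assumes the conclusion of that step before carrying it out. To make this a proof you would need to (i) prove London's inequality $\NPer(A(i|j))\ge\NPer(A)$ for a fully indecomposable minimizer, (ii) prove Alexandrov's inequality with its equality case, and (iii) arrange the deductions in the correct order so that positivity is a consequence rather than a hypothesis.
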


\tref{t:vdW} was first conjectured by van der Waerden in 1926, and was a famous open problem inspiring the development of much of the theory on permanents up until 1981, when two proofs appeared independently in the same year by Egorychev \cite{Egorychev} and Falikman \cite{Falikman}. 
Important to both proofs is that the permanent of a doubly stochastic matrix is positive and the resulting fundamental result proven by Birkhoff in 1946 \cite{BirkhoffOriginal}.

\begin{thm}\label{t:Birk}
If $A$ is a doubly stochastic matrix, then there are permutation matrices $P_1,\dots,P_s$ and positive constants $c_1,\dots,c_s$ such that \[A=\sum_{i=1}^sc_iP_i\] and $\sum_ic_i=1$.
\end{thm}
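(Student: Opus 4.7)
The plan is to prove Birkhoff's theorem by induction on the number of nonzero entries of $A$, with the key engine being the existence of a \emph{positive diagonal}: a permutation $\sigma$ with $A(i,\sigma(i))>0$ for all $i$.

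First I would establish the existence of a positive diagonal via Hall's marriage theorem. Build the bipartite graph $G$ on vertex classes $R,C$ (rows and columns of $A$), with an edge $ij$ whenever $A(i,j)>0$. To verify Hall's condition, take any $S\subseteq R$ and let $N(S)\subseteq C$ be its neighbourhood. Since every entry outside $S\times N(S)$ in the rows of $S$ is zero, the total mass in those rows is $|S|=\sum_{i\in S}\sum_{j\in N(S)}A(i,j)$, while the same sum is bounded above by $\sum_{j\in N(S)}\sum_{i\in R}A(i,j)=|N(S)|$. Hence $|N(S)|\geq|S|$, so Hall's condition holds and a perfect matching exists, giving a permutation matrix $P_\sigma$ with $A(i,\sigma(i))>0$ for all~$i$.

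Next I would run the peeling step. Let $c=\min_i A(i,\sigma(i))>0$, and consider $A'=A-cP_\sigma$. Then $A'$ is non-negative, every row and column sum of $A'$ equals $1-c$, and $A'$ has strictly fewer nonzero entries than $A$ because the minimum entry on the chosen diagonal becomes zero. If $c=1$, then $A=P_\sigma$ is itself a permutation matrix and we are done. Otherwise $0<c<1$, and $B=(1-c)^{-1}A'$ is doubly stochastic with fewer positive entries than $A$. By the inductive hypothesis, $B=\sum_{i=1}^t d_iP_i$ with $d_i>0$ and $\sum d_i=1$, so
\[
A=cP_\sigma+(1-c)B=cP_\sigma+\sum_{i=1}^t(1-c)d_iP_i,
\]
which is the desired convex combination, with coefficients summing to $c+(1-c)\sum d_i=1$.

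For the base case, observe that a doubly stochastic matrix has at least $n$ positive entries (one per row), and if it has exactly $n$ then they form a permutation matrix. Since each iteration of the peeling step strictly decreases the number of positive entries, the induction terminates.

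The main obstacle is the existence of the positive diagonal; everything else is bookkeeping. The counting argument above, comparing the row-mass and column-mass of the submatrix indexed by $S\times N(S)$, is the crucial observation that unlocks Hall's condition and thus the entire theorem.
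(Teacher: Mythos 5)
Your proof is correct: the Hall's-condition computation (comparing the row-mass $|S|$ of the submatrix on $S\times N(S)$ with the column-mass bound $|N(S)|$) is exactly the right way to extract a positive diagonal, and the peeling induction on the number of positive entries is sound, including the $c=1$ and base cases. Note that the paper does not prove this statement at all — it cites it as Birkhoff's classical 1946 theorem — so there is no in-paper argument to compare against; what you have written is the standard and complete textbook proof.
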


In this paper we examine attempts to generalise Birkhoff's Theorem and van der Waerden's conjecture to higher dimensional matrices.
For a positive integer $n$, let $I_n=\{1,\dots,n\}$. For $d\ge2$, a \emph{$d$-dimensional matrix $A$ of order $n$} is an array $I_n\times\cdots\times I_n\to \mathbb R$. For $(i_1,\dots,i_d)\in I_n\times\cdots\times I_n$, we refer to $A(i_1,\dots,i_d)$ as the $(i_1,\dots,i_d)$ \emph{element} of $A$. Denote by $M(d,n)$ the set of $d$-dimensional matrices of order $n$. 

A \emph{submatrix} of a matrix $A$ is a restriction of $A$ to $L_1\times\cdots\times L_d$, where $L_i\subseteq I_n$ for each $i$. If for all $i$ we have $|L_i|\in\{1,n\}$, then the submatrix is called a $k$-\emph{plane}, where $k$ is the number of subsets $L_i$ with $|L_i|=n$. We will call a $(d-1)$-plane of $A$ a \emph{hyperplane}, and a $1$-plane of $A$ a \emph{line}.

For $A\in M(d,n)$ and $1\le s\le d$, define the \emph{$s$-permanent} of $A$, denoted $\sPer(A)$, to be the sum of all products of $n^s$ elements of $A$, no two in the same $(d-s)$-plane. We will call such a selection of $n^s$ elements, no two in the same $(d-s)$-plane, an $s$-\emph{diagonal} of $A$, and say an $s$-diagonal is \emph{positive} if all the elements of the diagonal are positive. If no $s$-diagonal exists then the $s$-permanent is an empty sum and hence equal to zero. Throughout this paper we are primarily interested in the $1$-permanent and $1$-diagonals. However, we acknowledge that all of the questions that we investigate have analogues for $s$-permanents that would be worth pursuing. Unless stated otherwise, by a \emph{diagonal} we will mean a $1$-diagonal and by \emph{permanent} we will mean the $1$-permanent.

Let $\Lambda_s(d,n)$ be the set of $(0,1)$-matrices in $M(d,n)$ with precisely one $1$ in each $s$-plane. A matrix in $\Lambda_s(d,n)$ will be called an $s$-\emph{permutation matrix}. Note that by placing ones on a diagonal and zeros elsewhere we get a $(d-1)$-permutation matrix. A non-negative $A\in M(d,n)$ is called \emph{$s$-polystochastic} if the sum of the elements within every $s$-plane is equal to 1. Denote by $\Omega_s(d,n)$ the set of $s$-polystochastic $d$-dimensional matrices of order $n$. The set $\Omega_s(d,n)$ is bounded and defined by finitely many linear inequalities, hence it forms a convex polytope and is given by the convex hull of its vertex set.

A \emph{Latin hypercube} of dimension $d$ and order $n$ is a $d$-dimensional matrix of order $n$ with the property that every line contains precisely one of each element in $I_n$. For each $d$-dimensional $1$-permutation matrix $P$ of order $n$ there is a corresponding Latin hypercube $H$ of dimension $d-1$ and order $n$, and vice versa. The correspondence is that $P(i_1,\dots,i_d)=1$ if and only if $H(i_1,\dots,i_{d-1})=i_d$. We will write $P=\corp(H)$ and $H=\corh(P)=\corp^{-1}(P)$. Furthermore, we see that a positive diagonal in $P$ corresponds to a \emph{transversal} in $H$, that is, a selection of $n$ elements, no two in the same hyperplane or sharing the same symbol. Hence $\Per(P)$ counts transversals in $H$. On several occasions we will also use a concept we call a \emph{mixed transversal} of a set $\{H_1,\dots,H_k\}$ of Latin hypercubes of order $n$. By this we mean a selection of $n$ elements $\{\alpha_1,\dots,\alpha_n\}$ with each $\alpha_i\in H_j$ for some $j$ that may depend on $i$, and such that any two elements differ in every coordinate and contain different symbols.

There is a natural action of the wreath product $\sym_n\wr\sym_d$ on $M(d,n)$, where $\sym_d$ permutes the coordinates and each copy of $\sym_n$ permutes the values within one coordinate. The same action works on $\Lambda_s(d,n)$ and $\Omega_s(d,n)$. Given the above correspondence with $\Lambda_1(d,n)$, this then induces an action of $\sym_n\wr\sym_d$ on Latin hypercubes of order $n$ and dimension $d-1$. A \emph{species} of Latin hypercubes or polystochastic matrices is an orbit under these actions.

\tref{t:Birk} can be restated as saying that the set of doubly stochastic matrices is the \emph{convex hull} of the permutation matrices. By the convex hull we mean the set of convex combinations, that is $x\in\Hull(X)$ if $x$ can be expressed as a combination $x=\sum_ia_ix_i$ for some $x_i\in X$ and coefficients $a_i>0$ with $\sum_i a_i=1$.  It is a straightforward exercise to see that the proof of Birkhoff's theorem in dimension $2$ generalises to give a decomposition of matrices in $\Omega_s(d,n)$ into $s$-permutation matrices, provided the $(d-s)$-permanent is positive on $\Omega_s(d,n)$. Jurkat and Ryser \cite{JurkatRyser} pose the problem of decomposing $s$-polystochastic matrices into $s$-permutation matrices for $s\geq 2$ as a generalisation of Birkhoff's theorem. 

Dow and Gibson \cite{GibsonDow} were the first to seriously consider permanents of higher dimensional matrices. They showed that there are $(d-1)$-polystochastic matrices for every dimension $d\ge3$ and order $n\ge2$ (with $n\ne3$) which have vanishing permanent (their paper does not exclude the $n=3$ case, but their construction fails in that case). Realising this meant that there could be no higher dimensional analogue of Birkhoff's theorem for $(d-1)$-polystochastic matrices, they instead made the following conjecture about the permanent on the convex hull of the $(d-1)$-permutation matrices.

\begin{conj}\label{DGconj} 
If $A\in\Hull(\Lambda_{d-1}(d,n))$, then $\Per(A)\ge(n!/n^n)^{d-1}$, with equality if and only if $A=n^{1-d}J_n^d$.
\end{conj}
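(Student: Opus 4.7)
The plan is to adapt the Egorychev--Falikman proof of \tref{t:vdW} to this higher-dimensional setting. As a first step I would verify the claimed equality case: averaging uniformly over the $(n!)^{d-1}$ elements of $\Lambda_{d-1}(d,n)$ produces a matrix invariant under the transitive action of $\sym_n\wr\sym_d$ on entries, hence a scalar multiple of $J_n^d$. Counting the $((n-1)!)^{d-1}$ diagonals through any fixed entry identifies the scalar as $n^{1-d}$, so $n^{1-d}J_n^d\in\Hull(\Lambda_{d-1}(d,n))$, and a direct evaluation gives $\Per(n^{1-d}J_n^d)=(n!)^{d-1}(n^{1-d})^n=(n!/n^n)^{d-1}$, exactly the conjectured value.

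For the lower bound, I would first use compactness of $\Hull(\Lambda_{d-1}(d,n))$ and continuity of $\Per$ to obtain a minimiser $A^*$. Because $\Per$ is multilinear in the entries of $A$ and the constraints defining $\Hull(\Lambda_{d-1}(d,n))$ are linear, a Lagrange/variational argument should force a cofactor-equality system at $A^*$ analogous to the one used in the $d=2$ proof: along any admissible ``line direction'' in the polytope the permanent is linear, so the corresponding partial sums must coincide at the minimum. Together with invariance of the whole problem under $\sym_n\wr\sym_d$, one would hope these conditions pin down $A^*=n^{1-d}J_n^d$ uniquely.

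The technical heart of the argument would be an Alexandrov--Fenchel-type inequality that makes the concavity of $\log\Per$ on the polytope rigorous. In the $d=2$ case the mixed-discriminant inequality does this job; in higher dimensions a natural substitute is a Gurvits-style capacity bound applied to the multihomogeneous polynomial
\[
p_A(x^{(1)},\dots,x^{(d-1)})=\prod_{i=1}^n\sum_{j_1,\dots,j_{d-1}=1}^n A(i,j_1,\dots,j_{d-1})\,x^{(1)}_{j_1}\cdots x^{(d-1)}_{j_{d-1}},
\]
whose coefficient of $\prod_{k,j}x^{(k)}_j$ is exactly $\Per(A)$, combined with AM--GM applied to the hyperplane-sum constraints inherited from the convex combination.

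The main obstacle, and the most likely point of failure, is precisely this final inequality: no higher-dimensional analogue of Alexandrov--Fenchel giving the conjectured bound is known, and the needed concavity of $\log\Per$ on $\Hull(\Lambda_{d-1}(d,n))$ is genuinely delicate. Indeed the abstract announces that this very conjecture is \emph{false} for odd $d$, via counterexamples obtained by combining a theorem of Taranenko with $1$-permutation matrices that admit convex decompositions into positive diagonals. Consequently, any attempted proof along the lines above must break at the inequality step, and the value of the exercise lies in locating the obstruction precisely rather than in finishing the argument.
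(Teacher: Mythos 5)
You have correctly recognised that the statement is a \emph{conjecture}, not a theorem: the paper offers no proof of it, and in fact Section~\ref{s:DG} refutes it for all odd $d\ge 3$ and all $n>2$. Your preliminary computations are sound and agree with remarks in the paper: averaging the $(n!)^{d-1}$ matrices of $\Lambda_{d-1}(d,n)$ does give $n^{1-d}J_n^d$ (so the putative extremal matrix is at least in the hull), and $\Per(n^{1-d}J_n^d)=(n!)^{d-1}n^{(1-d)n}=(n!/n^n)^{d-1}$ is the conjectured value. Your diagnosis of where an Egorychev--Falikman-style argument must break -- the absence of any Alexandrov--Fenchel/log-concavity substitute in dimension $d\ge 3$ -- is reasonable, and you were right not to manufacture a proof where none can exist.

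For comparison, the paper's actual refutation (for odd $d$) does not engage with the inequality machinery at all. It combines Taranenko's result (\tref{JMinMax}) that $n^{-1}J_n^d$ is a strict \emph{local maximum} of the permanent on $\Omega_1(d,n)$ when $d$ is odd, with an explicit construction of a $1$-polystochastic matrix $A_n\ne n^{-1}J_n^3$ such that $n^{-1}A_n\in\Hull(\Lambda_2(3,n))$ (via orthogonal Latin squares for $n\ne 6$, and an ad hoc matrix $A_6$ for $n=6$), then lifts this to all odd $d$ by the product construction of \lref{MultPoly} and \cyref{MultOnes}. Perturbing from $n^{1-d}J_n^d$ towards $n^{2-d}A$ inside the hull then strictly decreases the permanent, contradicting the conjectured minimality. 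Note that the conjecture remains open for even $d\ge 4$, so your claim that ``any attempted proof must break'' is only established for odd $d$; for even $d$ the obstruction you describe is a genuine difficulty but not a known impossibility.
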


Here $J_n^d$ is the $d$-dimensional matrix of order $n$ with all elements equal to $1$. They showed that the conjecture holds for order $2$.

Another approach to generalising van der Waerden's conjecture would be
to look at the $1$-polystochastic matrices in dimensions and orders where the minimum
permanent is not known to be zero, possibly leading to:

\begin{conj}\label{DGconjmod} 
If $A\in\Omega_1(d,n)$ where $d$ is even or $n$ is odd, then
$\Per(A)\ge(n!)^{d-1}/n^n$, with equality if and only if
$A=n^{-1}J_n^d$.
\end{conj}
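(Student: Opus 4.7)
The plan is to emulate the Egorychev--Falikman proof of Theorem~$\ref{t:vdW}$, treating that as the $d=2$ base case, and to port its two main ingredients to general $d$ under the stated parity restriction on $(d,n)$.

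\emph{Existence and positivity.} Since $\Omega_1(d,n)$ is compact and $\Per$ is a polynomial, a minimiser $A^\ast$ exists. The classical proof crucially uses that $\Per$ is strictly positive on the interior, ultimately via K\"onig's theorem. I would therefore first attempt to show that $\Per(A)>0$ for every $A\in\Omega_1(d,n)$ when $d$ is even or $n$ is odd. A natural approach is an alternating/augmenting argument on an auxiliary hypergraph: if some such $A$ admitted no positive diagonal, slice it into hyperplanes and inductively extract a configuration whose very existence would contradict the parity hypothesis on $(d,n)$, in analogy with the constructions used by the authors for the opposite parity.

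\emph{First-order optimality and an Alexandrov-type inequality.} At any interior local minimum $A^\ast$, Lagrange multipliers give
\[ \frac{\partial\,\Per}{\partial A(\mathbf{x})}(A^\ast)\;=\;\sum_{L\ni\mathbf{x}}\lambda_L \]
for reals $\lambda_L$ indexed by the $d$ lines $L$ through $\mathbf{x}$. The right-hand side depends on $\mathbf{x}$ only through $d$ separate $(d-1)$-tuples obtained by deleting one coordinate at a time. I would combine this strong structural restriction with a higher-dimensional analogue of Alexandrov's inequality for mixed discriminants, applied to $\Per$ viewed as a multilinear form in the $n$ hyperplane-slices with respect to the first coordinate, to force $A^\ast$ to be invariant under the natural $\sym_n\wr\sym_d$ action on $\Omega_1(d,n)$, and therefore equal to $n^{-1}J_n^d$. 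The most promising source of such an inequality is the Gurvits capacity / stable-polynomial framework, which yields the cleanest reproof of van der Waerden at $d=2$. Boundary local minima would be excluded by a separate cycle-exchange perturbation: any zero entry admits an exchange along the $d$ lines through it that strictly decreases $\Per$, provided positivity from the previous step holds throughout the perturbation.

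\emph{Principal obstacles.} Both steps are genuinely hard. Positivity of $\Per$ on $\Omega_1(d,n)$ already subsumes major open problems: restricted to $\Lambda_1(3,n)$ with $n$ odd, it is exactly Ryser's conjecture that every odd-order Latin square has a transversal, which is a more than fifty-year-old open problem. For the second step, no $d$-dimensional analogue of Alexandrov's inequality is currently known to yield the sharp constant $(n!)^{d-1}/n^n$; existing extensions of the Gurvits capacity method give weaker bounds on transversal counts but do not reach the van der Waerden threshold for general $d$. Absent a new sharp inequality of this kind, any proof along this template is likely to be confined to small $d$ or to yield only an asymptotic version of Conjecture~$\ref{DGconjmod}$.
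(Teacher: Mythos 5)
The statement you are trying to prove is a conjecture, not a theorem, and the paper does not prove it --- on the contrary, the surrounding discussion records that it is \emph{false} in essentially every case beyond $d=2$. First, Theorem~$\ref{JMinMax}$ (Taranenko) states that $n^{-1}J_n^d$ is a local \emph{maximum} of the permanent on $\Omega_1(d,n)$ whenever $d$ is odd; since $\Per(n^{-1}J_n^d)=(n!)^{d-1}/n^n$, for odd $d$ and odd $n$ (a case covered by the conjecture's hypothesis) there are nearby polystochastic matrices violating the claimed lower bound. Second, the paper quotes Taranenko's result that for odd $n$ or even $d\ge 4$ there exist $A\in\Omega_1(d,n)$ with $\Per(A)\le(1/2+o(1))^n\Per(n^{-1}J_n^d)$ as $n\to\infty$, and that the conjecture already fails for $n=3$ and all $d\ge3$. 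So no proof strategy can succeed, and your proposal's goal of showing $n^{-1}J_n^d$ is the global minimiser is unattainable for $d\ge3$.

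Even setting aside the falsity, your two main steps are not viable as stated. The positivity step you propose to establish first is precisely Conjecture~$\ref{ZeroConj}$, which the paper treats as open (known only for order $3$, and for $d=n=4$); as you yourself note, it subsumes Ryser's conjecture on transversals of odd-order Latin squares. And the Alexandrov/capacity step would, if it worked, contradict Theorem~$\ref{JMinMax}$ in odd dimensions, which is a signal that no such sharp inequality can exist there: the second-order behaviour of the permanent at $n^{-1}J_n^d$ genuinely changes sign with the parity of $d$, something your template does not account for. If you want a true statement in this neighbourhood to work on, the open content is the positivity question (Conjecture~$\ref{ZeroConj}$) and the even-dimensional decomposition question, not the van der Waerden-type lower bound.
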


A counterexample to \cjref{DGconj} might not be 1-polystochastic and
hence might not yield a counterexample to
\cjref{DGconjmod}. Conversely, a counterexample to \cjref{DGconjmod}
might not be decomposable into a sum of positive diagonals and hence
might not yield a counterexample to \cjref{DGconj}. Therefore it is not clear
whether either of these conjectures implies the other for any
given values of $d$ and $n$.

Taranenko \cite{TaranenkoLS} proved the following result, which shows
that \cjref{DGconjmod} fails for odd dimensions.

\begin{thm}\label{JMinMax}
  The matrix $n^{-1}J_n^d$ is a local extrema of the permanent amongst $\Omega_1(d,n)$. If $d$ is even, $n^{-1}J_n^d$ is a local minimum, and if $d$ is odd, $n^{-1}J_n^d$ is a local maximum.
\end{thm}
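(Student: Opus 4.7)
The plan is to Taylor-expand the permanent around $J:=n^{-1}J_n^d$. Any $A\in\Omega_1(d,n)$ sufficiently close to $J$ can be written as $A=J+B$ where $B\in M(d,n)$ has zero sum along every line, and conversely any such $B$ with small enough entries yields a matrix in $\Omega_1(d,n)$. By multilinearity of $\Per$,
\[
\Per(J+B)=\sum_{\Delta}\prod_{e\in\Delta}\bigl(n^{-1}+B(e)\bigr)=\sum_{m=0}^{n}n^{m-n}\sum_{|S|=m}N(S)\prod_{e\in S}B(e),
\]
where the inner sum is over unordered $m$-subsets $S$ of $I_n^d$ and $N(S)$ is the number of $1$-diagonals containing $S$. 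A direct count via $(d-1)$-tuples of permutations gives $N(S)=((n-m)!)^{d-1}$ when the elements of $S$ pairwise differ in every coordinate, and $N(S)=0$ otherwise.

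The linear term vanishes, since $\sum_{e}B(e)=\sum_{i_2,\dots,i_d}\sum_{i_1}B(i_1,i_2,\dots,i_d)=0$, each inner sum being a line-sum of $B$. Hence $J$ is already a critical point of $\Per$ on $\Omega_1(d,n)$, and everything hinges on the quadratic term, which (after converting to ordered pairs) reduces to evaluating
\[
\Sigma:=\sum_{e_1,e_2\in I_n^d}B(e_1)B(e_2)\prod_{k=1}^{d}\bigl[i_k^{(1)}\neq i_k^{(2)}\bigr].
\]
Expanding $\prod_k(1-[i_k^{(1)}=i_k^{(2)}])$ by inclusion--exclusion and swapping sums yields
\[
\Sigma=\sum_{T\subseteq[d]}(-1)^{|T|}\sum_{v\in I_n^{T}}g_T(v)^2,\qquad g_T(v):=\sum_{e\in I_n^d,\,e|_T=v}B(e).
\]
The pivotal observation is that $g_T(v)=0$ whenever $|T|<d$: picking any $k\in[d]\setminus T$ and grouping the summation by the remaining coordinates in $[d]\setminus(T\cup\{k\})$ expresses $g_T(v)$ as a sum of line-sums along coordinate $k$, each equal to $0$. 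Only the term $T=[d]$ survives, and it contributes $\Sigma=(-1)^d\sum_{e}B(e)^2$. Thus the coefficient of $t^2$ in $\Per(J+tB)$ equals
\[
\tfrac12\,n^{2-n}((n-2)!)^{d-1}(-1)^d\sum_{e}B(e)^2,
\]
whose sign is $(-1)^d$ whenever $B\neq 0$.

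To finish, the remaining terms in the expansion are polynomials in the entries of $B$ of total degree at least $3$, hence bounded by $C\bigl(\sum_{e}B(e)^2\bigr)^{3/2}$ uniformly on any bounded set within the finite-dimensional linear subspace of line-zero-sum matrices. Consequently, for any nonzero $B$ in this subspace of sufficiently small norm the quadratic term dominates, and its sign $(-1)^d$ determines that of $\Per(J+B)-\Per(J)$. This yields a strict local minimum of $\Per$ at $J$ when $d$ is even, and a strict local maximum when $d$ is odd. The only nontrivial step is the inclusion--exclusion that collapses the quadratic form into a scalar multiple of $\sum_e B(e)^2$; the vanishing of the linear term and the control of the cubic-and-higher remainder are routine.
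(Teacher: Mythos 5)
This theorem is quoted in the paper as a result of Taranenko \cite{TaranenkoLS}; the paper itself gives no proof, so there is nothing internal to compare against. Your argument is a correct, self-contained proof by the second-order Taylor expansion at $n^{-1}J_n^d$, which is essentially the standard route. The key steps all check out: a neighbourhood of $n^{-1}J_n^d$ in $\Omega_1(d,n)$ is exactly $\{n^{-1}J_n^d+B\}$ with $B$ small and all line sums zero (the nonnegativity constraints are inactive there); the count $N(S)=((n-m)!)^{d-1}$ follows from identifying a diagonal with a $(d-1)$-tuple of permutations; the linear term vanishes because $\sum_e B(e)$ is a sum of line sums; and the inclusion--exclusion collapse of the quadratic form is valid because $g_T(v)$ is a sum of line sums in any direction $k\notin T$, so only $T=[d]$ survives, giving the coefficient $\tfrac12 n^{2-n}((n-2)!)^{d-1}(-1)^d\sum_e B(e)^2$ with strict sign $(-1)^d$ for $B\neq0$. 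The remainder estimate $O(\|B\|^3)$ for the degree-$\ge3$ terms is routine since there are finitely many homogeneous terms of bounded degree. The conclusion (strict local minimum for even $d$, strict local maximum for odd $d$) is in fact slightly stronger than the bare statement of the theorem.
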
 

In a subsequent paper \cite{TaranenkoBig}
Taranenko offered this conjecture about where other local extrema can occur.

\begin{conj}\label{cj:centreface}
  All local extrema of the permanent on the polytope $\Omega_1(d,n)$
  are located at the vertices or centres of its faces. 
\end{conj}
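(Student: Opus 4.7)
The plan is to approach the conjecture via first-order optimality on the polytope $\Omega_1(d,n)$. Let $A$ be a local extremum of $\Per$ and let $F$ be the unique face of $\Omega_1(d,n)$ whose relative interior contains $A$; this face is determined by the support $S=\Supp(A)\subseteq I_n^d$. By multilinearity of $\Per$, the partial derivative with respect to the entry at position $(i_1,\dots,i_d)$ equals the permanent of the sub-matrix of $A$ obtained by deleting the $d$ hyperplanes through that position. Denoting this partial derivative by $G(i_1,\dots,i_d)$, the first-order optimality condition at a relative-interior extremum is that $G|_S$ is orthogonal to every matrix supported on $S$ whose line sums all vanish.

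By the standard duality for ``line-sum-zero'' subspaces, this condition is equivalent to the existence of functions $f_1,\dots,f_d$, with $f_k$ independent of the $k$-th coordinate, such that $G(i_1,\dots,i_d)=\sum_{k=1}^d f_k(i_1,\dots,\widehat{i_k},\dots,i_d)$ for every $(i_1,\dots,i_d)\in S$. I would then try to deduce that $A$ must coincide with the \emph{barycenter} of $F$, namely the unique point of $F$ fixed by the stabilizer of $S$ in $\sym_n\wr\sym_d$. One natural attempt is to average $A$ over this stabilizer; the averaged matrix $\tilde A$ lies in $F$ and satisfies the same first-order conditions, and a uniqueness argument --- perhaps leveraging some log-concavity of $\Per$ restricted to $F$, or an inductive reduction to the doubly stochastic case handled by \tref{t:vdW} --- would close the argument. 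Note that when $S$ is the support of a single $1$-permutation matrix the face $F$ is a point, so the statement is automatic there, and when $S$ is the full cube the barycenter is $n^{-1}J_n^d$, consistent with \tref{JMinMax}.

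The hard part --- and the place where I expect the argument to break down --- is passing from these first-order conditions to actual uniformity of $A$ on its combinatorial orbits in $S$. The conditions amount to roughly $|S|$ scalar equations modulo a large $(d n^{d-1})$-dimensional space of redundancies, and generically admit asymmetric solutions. For supports with rich combinatorial structure (for example, those arising from the zero-permanent $1$-permutation matrices promised by the abstract, or from convex combinations of several such matrices), one can often exhibit a non-trivial direction inside $F$ along which $G$ is preserved to first order while the symmetry of $A$ is not. I therefore anticipate that this proof strategy will fail, and suspect the correct output of the analysis is not a proof of \cjref{cj:centreface} but rather a \emph{recipe} for counterexamples: identify a face $F$ whose support admits a first-order-stationary perturbation away from the barycenter, which is consistent with the abstract's announcement that counterexamples exist.
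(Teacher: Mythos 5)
You have correctly read the situation at the top level: the statement is a conjecture (Taranenko's), and the paper refutes it rather than proving it (Corollary~\ref{cy:Tarcjfalse}: the conjecture is false for even $n$ and odd $d$). However, your proposed route to counterexamples --- first-order stationarity conditions on a face, duality for line-sum-zero subspaces, and a search for a first-order-stationary perturbation away from the barycenter --- is not carried out, and it is much harder than what is actually needed. You never exhibit a concrete face, support, or perturbation, so as it stands the proposal neither proves nor disproves the conjecture.

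The key idea you are missing is far more elementary: the permanent is non-negative on $\Omega_1(d,n)$, so \emph{every zero of the permanent is automatically a local minimum}, with no stationarity analysis required. The conjecture permits only finitely many local extrema (vertices and face centres form a finite set), so it suffices to exhibit uncountably many zeros. The paper does this constructively: for even $n$ and odd $d$, Theorem~\ref{ZeroThm} (via the Delta lemma, $\sum_i\Delta(\alpha_i)\CongN n/2$ for any transversal) shows that every $1$-permutation matrix whose Latin hypercube agrees with the cyclic hypercube $C_{d-1,n}$ outside $r$ consecutive hyperplanes, where $r(r-1)<n$, has zero permanent --- and, crucially, that \emph{all convex combinations} of such matrices (with the $r$ exceptional hyperplanes fixed) also have zero permanent, because the same Delta-function obstruction kills every mixed transversal. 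Corollary~\ref{ZeroDim} then produces a linearly independent family of $\Theta(n^{3/2})$ such matrices, so $\Zer_1(d,n)$ contains a polytope of positive dimension, hence a continuum of local minima, refuting the conjecture. Your first-order framework, even if completed, would only certify stationarity in the relative interior of a face; it would not by itself distinguish local extrema from saddle points, whereas the non-negativity-at-a-zero argument does so for free.
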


In the same paper she showed that \cjref{DGconjmod} fails asymptotically:

\begin{thm}
Suppose that the order $n$ is odd or the dimension $d\ge 3$ is even. Then there exists a $1$-polystochastic $d$-dimensional matrix $A$ of order $n$ whose permanent is asymptotically less than that of the uniform matrix:
\[\Per(A)\le\left(\frac{1}{2}+o(1)\right)^n\Per(n^{-1}J_n^d) \text{\; as\; $n\rightarrow\infty$.}\]
\end{thm}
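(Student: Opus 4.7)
The plan is to exhibit an explicit $1$-polystochastic matrix whose permanent is exponentially smaller than $\Per(n^{-1}J_n^d)=(n!)^{d-1}/n^n$. A natural class of candidates consists of scaled indicator matrices of structured supports: choose $S\subseteq I_n^d$ meeting every line in exactly $\kappa$ cells, then $A=\kappa^{-1}\mathbb{1}_S$ lies in $\Omega_1(d,n)$ and $\Per(A)=\kappa^{-n}N(S)$, where $N(S)$ counts the diagonals contained in $S$. The aim is to arrange that $\kappa^{-n}N(S)\le(1/2+o(1))^n(n!)^{d-1}/n^n$.

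For the support, I would take the preimage of a subset $T\subseteq\mathbb{Z}/n\mathbb{Z}$ under the linear form $(i_1,\ldots,i_d)\mapsto i_1+\cdots+i_d\pmod n$. Each line meets this $S$ in exactly $|T|=\kappa$ cells (one per residue class), so $A$ is automatically $1$-polystochastic, and a positive diagonal corresponds to a tuple $(\sigma_2,\ldots,\sigma_d)\in\sym_n^{\,d-1}$ satisfying $t+\sigma_2(t)+\cdots+\sigma_d(t)\in T\pmod n$ for every $t\in I_n$. Expanding $\mathbb{1}_T$ via the character group of $\mathbb{Z}/n\mathbb{Z}$ and swapping orders of summation expresses $N(S)$ as a sum, indexed by $n$-tuples of characters, of products of character-weighted permanents over $\sym_n$.

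The principal-character contribution is precisely $(|T|/n)^n(n!)^{d-1}$, which reproduces $\Per(n^{-1}J_n^d)$ after the $\kappa^{-n}$ factor; the exponential improvement must therefore come from the non-principal characters. The case split in the hypothesis suggests two strategies. For $d\ge 3$ even, use $\mathbb{Z}/2\mathbb{Z}$-valued (quadratic-character-type) constructions, where the non-principal contributions reduce to permanents of $\pm 1$ matrices whose size can be controlled by classical estimates (reminiscent of Hadamard-type bounds). For $n$ odd, use the full cyclic character group and take $T$ to be an arithmetic progression of length about $n/2$, where the non-principal character values averaged over $T$ are uniformly small. Combining either estimate with Stirling's formula should yield the claimed $(1/2+o(1))^n$ bound.

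The principal obstacle is the character-sum estimate just sketched: the principal character alone already accounts for $\Per(n^{-1}J_n^d)$, so the required exponential savings must arise from cancellation among the many non-principal contributions, which is a delicate matter. If this ``sum-on-a-set'' indicator construction turns out to be insufficient to extract a clean factor of $(1/2+o(1))^n$, a refinement, such as taking $S$ to be the preimage of $T$ under a more general multilinear form, averaging over translates of a base construction, or building $A$ as a convex combination of several indicator matrices, may be required to achieve the stated exponent.
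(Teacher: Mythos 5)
This theorem is not proved in the paper at all: it is quoted verbatim from Taranenko \cite{TaranenkoBig}, so there is no in-paper argument to compare yours against. Judged on its own terms, your proposal is a plan rather than a proof, and the gap you yourself flag as ``the principal obstacle'' is not merely delicate but fatal for the construction you commit to.

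Concretely, the scaled indicator $A=\kappa^{-1}\chi_S$ with $S$ the preimage of $T\subseteq\mathbb{Z}/n\mathbb{Z}$ under $(i_1,\dots,i_d)\mapsto i_1+\cdots+i_d$ cannot deliver an exponential saving. Slice the diagonal count: fix $\sigma_2,\dots,\sigma_{d-1}$, put $w_i=i+\sigma_2(i)+\cdots+\sigma_{d-1}(i)$, and observe that the diagonals of $S$ extending this choice are counted by $\NPer(M)$, where $M$ is the $n\times n$ zero--one matrix with $M(i,j)=1$ iff $j+w_i\in T$. Every row of $M$ has sum $|T|$, and for all but an exponentially small fraction of tuples the column sums are $|T|\bigl(1+O(n^{-1/2+\varepsilon})\bigr)$. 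For such (near-)regular matrices, Schrijver's lower bound for regular bipartite graphs --- or Sinkhorn balancing followed by the Egorychev--Falikman theorem --- gives $\NPer(M)\ge(1-o(1))^n(|T|/n)^n\,n!$. Summing over tuples yields $\Per(A)\ge(1-o(1))^n\Per(n^{-1}J_n^d)$: the principal-character term is, up to $(1+o(1))^n$, a genuine \emph{lower} bound, so the non-principal characters cannot cancel all but a $(1/2+o(1))^n$ share of it. The only global obstruction the linear form supplies is the Delta-lemma congruence $\sum_i v_i\equiv c_0\pmod n$, which costs a factor of order $n$, not $2^n$. A telling sanity check: your ``$n$ odd'' branch imposes no restriction on $d$, yet for $d=2$ the claimed inequality contradicts van der Waerden's theorem (and indeed your $A$ is then a circulant doubly stochastic matrix, to which Theorem~\ref{t:vdW} applies directly); any correct proof must use $d\ge3$ essentially, and yours does not. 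The listed fallbacks (other multilinear forms, averaging translates, convex combinations of such indicators) remain of convolution type and do not escape this lower bound, so the key idea --- whatever construction Taranenko actually uses --- is still missing from the proposal.
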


She also showed that \cjref{DGconjmod} fails for $n=3$ and $d\ge3$.
In \sref{s:DG}, we use \tref{JMinMax} to show that \cjref{DGconj}
fails for odd dimensions.

As mentioned already, Dow and Gibson found $(d-1)$-polystochastic matrices with vanishing permanent. There are also known examples of $1$-polystochastic matrices that have vanishing permanent, which we generalise in \tref{ZeroThm}.  As a corollary we provide counterexamples to \cjref{cj:centreface}. However, for even dimension or odd order, there are no known examples of $1$-polystochastic matrices with vanishing permanent, and Taranenko \cite{TaranenkoBig} has conjectured that the permanent is always positive.

\begin{conj} \label{ZeroConj} 
All $1$-polystochastic matrices of even dimension or odd order have positive permanent. 
\end{conj}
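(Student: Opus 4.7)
My plan starts from the observation that since $A\in\Omega_1(d,n)$ is non-negative, $\Per(A)>0$ is equivalent to $\Supp(A)$ containing a $1$-diagonal. Thus \cjref{ZeroConj} reduces to a purely combinatorial claim: in even dimension or odd order, the support of every $1$-polystochastic matrix contains $n$ cells with no two in a common hyperplane. This reformulation shows that only the zero/nonzero pattern of $A$ matters, and in fact it suffices to treat supports that are inclusion-minimal among all $1$-polystochastic supports of the given shape. The target is therefore a Hall/K\"onig-type matching theorem in the hypermatrix setting.

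I would split the attack into two stages. \emph{Stage 1} handles the extreme case of $1$-permutation matrices, which via $\corh$ correspond to Latin hypercubes of dimension $d-1$, and asks whether each such $\corh(P)$ has a transversal when $d$ is even or $n$ is odd. For $d=2$ this is trivial; for $d=3$ with odd $n$ this is precisely Ryser's conjecture on Latin squares; for larger even $d$ it is the natural generalisation. The most plausible tool is an Alon--Tarsi sign argument: assign $\pm1$ to each $1$-diagonal via a suitable parity function, and show that under the hypothesis ``$d$ even or $n$ odd'' the signed count is nonzero, forcing the unsigned permanent to be nonzero as well. \emph{Stage 2} lifts the result from $1$-permutation matrices to all of $\Omega_1(d,n)$ by a direct Hall-type contradiction: assuming $\Supp(A)$ contains no $1$-diagonal, extract a blocking family of hyperplanes whose existence violates the line-sum constraints defining $\Omega_1(d,n)$.

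The principal obstacle is Stage 1. Even the case $d=3$, $n$ odd is Ryser's conjecture, which is open in general and has only recently been settled asymptotically (for all sufficiently large odd $n$) by Montgomery; no existing sign-based method covers all odd $n$, let alone the higher-dimensional analogues required for larger even $d$. Stage 2 carries its own subtlety: the Jurkat--Ryser route of decomposing $A$ into a convex combination of $1$-permutation matrices requires as a hypothesis that the $(d-1)$-permanent is positive on $\Omega_1(d,n)$, which for $s=1$ is essentially the statement we are trying to prove, so the lifting cannot go through a Birkhoff-style decomposition and must instead proceed by a direct matching argument. In short, \cjref{ZeroConj} strictly contains Ryser's conjecture as a special case, and a complete proof is almost certainly out of reach of current techniques; realistic short-term targets are small $(d,n)$ by direct computation, an asymptotic version via absorption methods, and the sub-case where $d$ is even and $n$ is odd so that both parity hypotheses are simultaneously available.
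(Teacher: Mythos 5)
The statement you were asked to prove is \cjref{ZeroConj}, which is an open conjecture due to Taranenko; the paper does not prove it and only records partial evidence (the cases $n=3$, and $d=n=4$). So there is no proof in the paper to compare against, and your proposal --- which is a research plan rather than a proof, and says so --- is the honest response. Your opening reduction is sound: for non-negative $A$, $\Per(A)>0$ iff $\Supp(A)$ contains a diagonal, and by the Jurkat--Ryser vertex characterisation quoted in the paper it suffices to treat matrices of minimal support, i.e.\ the vertices of $\Omega_1(d,n)$. Your identification of Ryser's conjecture as the special case of $1$-permutation matrices with $d=3$ and $n$ odd is also correct and is the right way to convey that a complete proof is currently out of reach.

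Two points in your plan deserve a warning, since as written they would fail. First, your Stage~2 ``Hall-type contradiction'' --- extracting a blocking family of hyperplanes that violates the line-sum constraints --- cannot work as a purely structural duality: the Frobenius--K\"onig equivalence between ``no diagonal'' and ``large blocking set'' is special to $d=2$, and Section~\ref{s:zero} of this paper exhibits $1$-polystochastic matrices (for odd $d$, even $n$) whose line sums are all $1$ yet whose support contains no diagonal. Any blocking-set argument must therefore use the parity hypothesis (``$d$ even or $n$ odd'') in an essential way, exactly as the Delta lemma does for the negative direction, and you give no mechanism for that. Second, Stage~1 plus Stage~2 as you describe them reduce the problem to $1$-permutation matrices, but the vertices of $\Omega_1(d,n)$ are not all $1$-permutation matrices (see the Ke--Li--Xiao catalogue cited in the paper), and a general $1$-polystochastic matrix need not contain the support of any permutation matrix. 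So even a full resolution of the Latin-hypercube transversal problem would not by itself settle the conjecture; the non-permutation vertices are an independent obstacle your plan does not address.
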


In support of this conjecture, Taranenko proved the order $3$ case. More recently Taranenko \cite{Taranenko4} showed that Conjecture \ref{ZeroConj} is true in dimension $4$ and order $4$. Since dimension $2$ is the classic result that doubly stochastic permutation matrices have positive permanent, this was one of the smallest of the unresolved cases. Taranenko \cite{TaranenkoBig} surveys some conditions on the non-zero elements of a matrix to guarantee the positivity of the permanent.

While there are known examples of $1$-polystochastic matrices of odd order that cannot be decomposed into a sum of diagonals, there are no such known examples for even dimensional $1$-polystochastic matrices. Taranenko \cite{TaranenkoBig} has conjectured that a decomposition is always possible in even dimensions. 

\begin{conj} All $1$-polystochastic matrices of even dimension can be represented as a non-negative linear combination of $(d-1)$-permutation matrices.
\end{conj}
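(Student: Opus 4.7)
The plan is to begin with a Birkhoff-style iterative peeling argument, but this immediately runs into a structural difficulty absent in dimension two. Given $A\in\Omega_1(d,n)$, one would like to produce a positive diagonal $D$, set $c=\min_{p\in D}A(p)$, subtract $c$ times the corresponding $(d-1)$-permutation matrix $P_D$, and iterate. The difficulty is that $P_D$ has support on only $n$ points and meets only $dn$ of the $dn^{d-1}$ lines of the ambient matrix (the $n$ points of a diagonal differ in every coordinate, so no two share a line), so subtracting $cP_D$ shifts these line sums to $1-c$ while leaving the remaining line sums at $1$. Unlike the two-dimensional situation, no single rescaling restores the polystochastic property, and the residual $A-cP_D$ lies outside $\Omega_1(d,n)$.

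Because of this, a prerequisite for any such argument is \cjref{ZeroConj}: the present conjecture is strictly stronger, since any decomposition $A=\sum c_iP_{D_i}$ with $c_i\geq0$ forces every positive entry of $A$ to lie on some $D_i$ with $c_i>0$, whence $A(p)\geq c_i$ for $p\in D_i$ and so $\Per(A)\geq c_i^n>0$. Granting \cjref{ZeroConj} for even $d$, I would next seek a strengthening that applies to the broader class of non-negative matrices arising as residuals of the peeling procedure. Concretely, the iteration requires a lemma of the form: if $A\geq 0$ is reachable as $A_0-\sum_{i<k}c_iP_{D_i}$ from some $A_0\in\Omega_1(d,n)$ and $A\neq 0$, then $A$ still contains a positive diagonal. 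A natural attempt is to track a combinatorial functional on $A$ (for example, a weighted count of diagonals through each entry) that is bounded below whenever $A\neq0$ and is non-increasing along the iteration.

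An alternative route bypasses explicit construction via Farkas' lemma: $A$ lies in the conic hull of the $(d-1)$-permutation matrices if and only if $\langle A,B\rangle\geq0$ for every $B\in M(d,n)$ whose sum on every $1$-diagonal is non-negative. The conjecture then reduces to a dual positivity statement about such ``diagonally non-negative'' tensors $B$. A proof along these lines would presumably proceed by identifying the extreme rays of this dual cone, then verifying $\langle A,B\rangle\geq0$ one extremal $B$ at a time, perhaps exploiting the $\sym_n\wr\sym_d$-symmetry of $\Omega_1(d,n)$ and some parity feature specific to even~$d$ (for example, an averaging or pairing of species that cancels in odd dimension).

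The main obstacle in either approach is the production of sufficiently many positive diagonals in a $1$-polystochastic matrix of even dimension. Since \cjref{ZeroConj} is itself open in general, and the present conjecture is strictly stronger, I expect that any serious attack must first settle \cjref{ZeroConj}; even then, the real work lies in upgrading \emph{one positive diagonal exists} to \emph{enough positive diagonals exist to exhaust every residual matrix encountered during peeling}, which seems to demand a genuinely new structural ingredient tied to the parity of $d$.
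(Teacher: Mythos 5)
This statement is a conjecture (due to Taranenko), stated in the paper without proof; there is no argument in the paper to compare yours against, and your submission is, by its own admission, a research plan rather than a proof. Nothing in it establishes the claim. That said, your individual observations are sound and worth recording: the naive Birkhoff peeling does fail for exactly the reason you give (a $(d-1)$-permutation matrix meets only $dn$ of the $dn^{d-1}$ lines, so subtracting a multiple of it does not uniformly rescale the line sums and the residual leaves $\Omega_1(d,n)$); the decomposition conjecture does imply the even-dimension case of \cjref{ZeroConj}, since a nonzero non-negative combination $A=\sum_i c_iP_{D_i}$ dominates $c_iP_{D_i}$ entrywise for some $c_i>0$ and hence $\Per(A)\ge c_i^n>0$; and the Farkas reformulation via the dual cone of ``diagonally non-negative'' tensors is the correct abstract restatement of membership in the conic hull of $\Lambda_{d-1}(d,n)$.

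The genuine gap is simply that none of your three routes is carried out: you do not produce the invariant that would keep positive diagonals available throughout the peeling, you do not identify the extreme rays of the dual cone, and you do not isolate any mechanism tied to the parity of $d$. Be aware also that the paper records a concrete warning for any approach that ignores parity: there exist $1$-polystochastic matrices of \emph{odd order} that admit no such decomposition, so any purported argument must break for odd $d$, and an argument that never uses evenness of $d$ is necessarily wrong. Your closing assessment --- that one must first settle \cjref{ZeroConj} for even dimension and then upgrade the existence of one positive diagonal to enough diagonals to exhaust every residual --- is a fair description of the state of the problem, but it is a description, not a proof.
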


While $1$-polystochastic matrices may not decompose into diagonals, it is still true that $\Omega_1(d,n)$ is the convex hull of its vertex set. The $1$-permutation matrices are vertices of this polytope, but in general there are others. There is a simple characterisation of the vertices in terms of their supports first given by Jurkat and Ryser \cite{JurkatRyser}, though stated in very different terminology than we use here.

\begin{thm} 
A matrix $A\in\Omega_1(d,n)$ is a vertex if and only if $A$ has minimal support, that is, if $\Supp(A)\supseteq\Supp(B)$ for $B\in\Omega_1(d,n)$, then $A=B$.
\end{thm}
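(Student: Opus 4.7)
The plan is to prove the two directions separately, using the standard polytope-vertex strategy: a point is a vertex iff it admits no nonzero feasible perturbation. The key observation is that the affine constraints defining $\Omega_1(d,n)$ are just line-sum equalities, so a perturbation that preserves membership is exactly a real matrix with zero line sums supported where we have room to move (i.e.\ on the support of $A$).

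For the $(\Leftarrow)$ direction, assume $A$ has minimal support and suppose for contradiction that $A$ is not a vertex, so $A=\lambda B+(1-\lambda)C$ for some $B,C\in\Omega_1(d,n)$ with $B\neq C$ and $0<\lambda<1$. Since $B,C\ge 0$, any zero entry of $A$ forces the corresponding entries of $B$ and $C$ to be zero, giving $\Supp(B)\subseteq\Supp(A)$ and $\Supp(C)\subseteq\Supp(A)$. Minimality of the support of $A$ then forces $A=B=C$, contradicting $B\neq C$.

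For the $(\Rightarrow)$ direction, suppose $A$ does not have minimal support, so there is some $B\in\Omega_1(d,n)$ with $B\neq A$ and $\Supp(B)\subseteq\Supp(A)$. Set $D=A-B$: its line sums vanish (both $A$ and $B$ are $1$-polystochastic), and $\Supp(D)\subseteq\Supp(A)$ (outside $\Supp(A)$, both $A$ and $B$ vanish by the hypothesis on supports). Because $A$ is strictly positive on its support and there are only finitely many entries, one can pick $\varepsilon>0$ small enough that $A\pm\varepsilon D\ge 0$ entrywise; the line-sum condition is automatic since $D$ has zero line sums. Hence $A\pm\varepsilon D\in\Omega_1(d,n)$, and $A=\tfrac12(A+\varepsilon D)+\tfrac12(A-\varepsilon D)$ is a nontrivial convex combination (since $D\neq 0$), so $A$ is not a vertex.

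There is no real obstacle here: the only nontrivial point is the perturbation argument in the second direction, but it is standard and relies just on the fact that the constraints cutting out $\Omega_1(d,n)$ from the non-negative orthant are linear equalities in the entries, so the feasible directions from $A$ are precisely the zero-line-sum matrices whose support is contained in $\Supp(A)$. The statement would follow immediately from general convex geometry applied to this description, but it is cleanest to write it out directly in the two short paragraphs above.
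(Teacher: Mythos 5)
Your proof is correct: both directions are sound, and the perturbation argument in the forward direction (that $D=A-B$ has zero line sums and support inside $\Supp(A)$, so $A\pm\varepsilon D$ stays in $\Omega_1(d,n)$ for small $\varepsilon$) is exactly the standard extreme-point argument. Note that the paper itself gives no proof of this statement --- it is quoted from Jurkat and Ryser \cite{JurkatRyser} --- so there is nothing to compare against, but your write-up is a complete and correct self-contained proof of the characterisation.
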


Ke, Li and Xiao \cite{KeLiXiao} give a formulation of this condition into a system of linear equations in the elements of $A$ for which a unique solution exists if and only if $A$ is a vertex. Using this formulation, they compute the entire set of vertices for $\Omega_1(3,3)$ and $\Omega_1(3,4)$. Finally, Linial and Luria \cite{LinLur3dim} give a lower bound on the number of vertices of $\Omega_1(3,n)$ as $n\to\infty$.

The structure of this paper is as follows. In \sref{s:zero} we
consider the zeros of the permanent on $s$-polystochastic
matrices.  We show that whenever the order is even and the dimension
is odd there is a large set of vertices such that the permanent of any
linear combination of these vertices is zero.  As a consequence we
deduce that \cjref{cj:centreface} fails.  In \sref{s:DG} we show that
\cjref{DGconj} fails for odd dimensions.

\section{Properties of the zero set}\label{s:zero}

In this section we consider
$\Zer_s(d,n)=\{A\in\Omega_s(d,n):\Per(A)=0\}$, the zero set of the
permanent amongst $s$-polystochastic matrices of order $n$ and
dimension $d$. Since $\Zer_s(d,n)$ is the zero set of a multivariate
polynomial on $M(d,n)$ and is contained inside the bounded set $\Omega_s(d,n)$, it is compact. We can also make the following observation about its structure.

\begin{lemma}\label{l:unionofpolytopes}
Let $s,d,$ and $n$ be positive integers, then $\Zer_s(d,n)$ is the
union of finitely many polytopes.
\end{lemma}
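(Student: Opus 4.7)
The plan is to exploit the fact that on $\Omega_s(d,n)$ every entry is non-negative, so the $s$-permanent, being a sum of products of non-negative numbers, vanishes if and only if every $s$-diagonal contains at least one zero entry. In other words, writing $\Zer_s(d,n)$ as a condition on the support (equivalently, the zero pattern) of $A$ turns a polynomial equation into a purely combinatorial covering condition on a finite ground set.

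More precisely, for any $S \subseteq I_n^d$, let
\[
P_S = \{A \in \Omega_s(d,n) : A(i_1,\dots,i_d) = 0 \text{ for all } (i_1,\dots,i_d) \in S\}.
\]
Each $P_S$ is the intersection of the polytope $\Omega_s(d,n)$ with the linear subspace cutting out the coordinates indexed by $S$, and so is itself a polytope. Call $S$ a \emph{diagonal cover} if $S$ meets every $s$-diagonal of $I_n^d$. I would then check the set-theoretic identity
\[
\Zer_s(d,n) \;=\; \bigcup_{S \text{ a diagonal cover}} P_S.
\]
The inclusion ``$\supseteq$'' is immediate since if $S$ hits every diagonal and $A$ vanishes on $S$, then each product in $\sPer(A)$ contains a zero factor. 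For ``$\subseteq$'', given $A \in \Zer_s(d,n)$, take $S = I_n^d \setminus \Supp(A)$; since every product indexed by a diagonal is zero and the entries of $A$ are non-negative, each diagonal must meet $S$, and trivially $A \in P_S$.

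Since $S$ ranges over subsets of the finite set $I_n^d$, the union is finite, giving the lemma. One could further restrict to \emph{minimal} diagonal covers, which makes the decomposition tidier but does not change the conclusion. The whole argument is short; the only thing to be careful about is the sign assumption, which is exactly what non-negativity of $\Omega_s(d,n)$ provides, so I do not anticipate any genuine obstacle.
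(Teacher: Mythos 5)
Your proof is correct, but it takes a genuinely different route from the paper. You decompose $\Zer_s(d,n)$ according to zero patterns: for each $S\subseteq I_n^d$ the set $P_S=\{A\in\Omega_s(d,n):A|_S=0\}$ is the intersection of the polytope with a linear subspace (indeed a face of $\Omega_s(d,n)$, since the defining hyperplane $\sum_{i\in S}A(i)=0$ is supporting), and $\Zer_s(d,n)$ is exactly the union of $P_S$ over the finitely many ``diagonal covers'' $S$. The paper instead parametrises by subsets $V$ of the vertex set of $\Omega_s(d,n)$, forms $P_V=\Hull(V)$, and argues via compactness of $\Zer_s(d,n)$ that each $P_V$ is either contained in the zero set or has interior disjoint from it; the union is then taken over those $V$ with $P_V\subseteq\Zer_s(d,n)$. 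Both arguments hinge on the same key observation --- that vanishing of the permanent of a non-negative matrix depends only on the support --- but your version converts that observation directly into a combinatorial covering condition and so avoids the compactness step and the slightly delicate claim about interiors of hulls of vertex subsets; it is arguably the more self-contained argument. Two small points of bookkeeping: the set $\Zer_s(d,n)$ is defined in the paper using the $1$-permanent $\Per$ regardless of $s$, so your covering condition should be phrased in terms of $1$-diagonals rather than $s$-diagonals (the argument is identical either way); and in the degenerate case where no diagonal exists, the empty set is vacuously a diagonal cover, so your union correctly returns all of $\Omega_s(d,n)$.
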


\begin{proof}
Whether or not the permanent is zero on a nonnegative matrix
depends only on the support. For any set $V$ of vertices of
$\Omega_s(d,n)$ there is a polytope $P_V=\Hull(V)$.  We claim that
either $P_V\subseteq\Zer_s(d,n)$ or the interior of $P_V$ is disjoint
from $\Zer_s(d,n)$. This is because $\Zer_s(d,n)$ is compact and all
points in the interior of $P_V$ have the same support. The boundary of
a polytope is itself a union of polytopes.  There are only finitely
many options for $V$, and $\Zer_s(d,n)$ is the union of $P_V$ over all
choices of $V$ for which $P_V\subseteq\Zer_s(d,n)$.
\end{proof}

It is important to note that the polytopes in
\lref{l:unionofpolytopes} are not necessarily disjoint. A concrete
example will be given after \tref{ZeroThm}.  

We say that the \emph{dimension} of $\Zer_s(d,n)$ is the maximum of
the dimensions of the polytopes $P_V$ contained in $\Zer_s(d,n)$.  One
of our goals for this section is to find a lower bound on the
dimension of $\Zer_1(d,n)$ for odd $d$ and even $n$ (when we know
that the permanent can be zero).

We also note that the permanent is a multi-linear function. It is
immediate that $\Omega_s(d,n)\setminus \Zer_s(d,n)$ is convex, since
all positive linear combinations of matrices with positive permanent
have positive permanent. A convex polytope with vertices
$P_1,\dots,P_m$ is necessarily closed, having elements
$a_1P_1+\cdots+a_mP_m$ given by non-negative solutions to the linear
equation $a_1+\cdots+a_m=1$. It follows that, if $\Zer_s(d,n)$ is
non-empty, then $\Omega_s(d,n)\setminus \Zer_s(d,n)$ is not a convex
polytope, given that $\Zer_s(d,n)$ is closed and $\Omega_s(d,n)$ is
convex and hence connected. Hence, $\Omega_s(d,n)\setminus
\Zer_s(d,n)$ is convex but is not a polytope when $\Zer_s(d,n)$ is
non-empty.

As mentioned in the introduction, in the even $n$ and odd $d$ case, it is well known that there are $1$-permutation matrices with zero permanent. We now give a large family of such matrices inspired by the result in \cite{IanNickTransversals} on Latin squares with no transversals.

For the following, let $\CongN$ denote congruence mod $n$.

\begin{Def}\label{LinDef} 
A hypercube $H\in M(d,n)$ is \emph{linear} if there exists $s\in \{1,\dots,n\}$ and $c_i\in\{1,\dots,n\}$ for $i=1,\dots,d$ such that
\begin{equation}\label{e:linear}
H(x_1,\dots,x_d)\CongN s+\sum_{i=1}^dc_ix_i.
\end{equation}
This hypercube will be Latin if and only if all $c_i$ are relatively
prime to $n$. A permutation matrix $P\in\Lambda_1(d+1,n)$ is
\emph{linear} if $\corh(P)$ is linear.  Define $C_{d,n}\in M(d,n)$ to
be the cyclic Latin hypercube of dimension $d$ and order $n$, obtained
by \eqref{e:linear} with $s\CongN 0$ and linearity coefficients
$c_i=1$ for all $i$.
\end{Def}

\begin{Def} For a matrix $A\in M(d,n)$, and an element $e=A(i_1,\dots, i_d)$, let the \emph{Delta function} be given by $\Delta(e)\CongN e-i_1-\cdots-i_d$. 
\end{Def}

The Delta function can be viewed as measuring the difference between a
matrix in $M(d,n)$ and $C_{d,n}$; the Delta function is zero
wherever a matrix agrees with $C_{d,n}$, and counts the difference
(mod~${n}$) wherever a matrix differs from $C_{d,n}$.

Variants of the following Lemma have been used to solve a wide variety of problems involving transversals and their generalisations \cite{transurv}.

\begin{lemma}[Delta lemma]\label{l:delta} 
Let $P\in\Lambda_1(d,n)$ and $T=\{\alpha_1,\dots,\alpha_n\}$ be a transversal of $\corh(P)$. Then, 
\[\sum_{i=1}^n \Delta(\alpha_i)\CongN\begin{cases} 0&\quad\text{if } n \text{ is odd or } d \text{ is even,}\\ {n}/{2}&\quad \text{otherwise}.\end{cases}\]
\end{lemma}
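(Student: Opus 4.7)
The plan is to unpack $\sum_i \Delta(\alpha_i)$ directly from the definitions and then collapse it using the combinatorial constraints that define a transversal. Since $H := \corh(P) \in M(d-1,n)$, each transversal element has the form $\alpha_k = H(i_1^{(k)}, \ldots, i_{d-1}^{(k)})$, so the definition of $\Delta$ (applied to the $(d-1)$-dimensional matrix $H$) gives
\[
\Delta(\alpha_k) \CongN \alpha_k - \sum_{j=1}^{d-1} i_j^{(k)}.
\]
Summing over $k$ and swapping the order of summation on the right yields
\[
\sum_{k=1}^n \Delta(\alpha_k) \CongN \sum_{k=1}^n \alpha_k \;-\; \sum_{j=1}^{d-1}\sum_{k=1}^n i_j^{(k)}.
\]

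Next I will invoke the two defining properties of a transversal of $H$: the symbols $\alpha_1, \ldots, \alpha_n$ are a permutation of $I_n$, and for each coordinate axis $j \in \{1, \ldots, d-1\}$ the values $i_j^{(1)}, \ldots, i_j^{(n)}$ are also a permutation of $I_n$. Each of the $d$ inner sums above therefore equals $n(n+1)/2$, giving
\[
\sum_{k=1}^n \Delta(\alpha_k) \CongN (2 - d)\cdot \frac{n(n+1)}{2}.
\]

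All that remains is a parity check on $n(n+1)/2$ modulo $n$. When $n$ is odd, $(n+1)/2$ is an integer so $n(n+1)/2 \CongN 0$, and hence the sum is $\CongN 0$. When $n$ is even, $n+1$ is odd and $n(n+1)/2 \CongN n/2$; multiplying by $(2-d)$ then gives $\CongN 0$ when $d$ is even (so $2-d$ is even) and $\CongN n/2$ when $d$ is odd (so $2-d$ is odd). Assembling these cases reproduces the piecewise formula in the statement.

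I do not expect any substantive obstacle. The one thing to keep in mind is the dimension bookkeeping: although $P$ lives in $\Lambda_1(d,n)$, its associated Latin hypercube $H = \corh(P)$ has dimension $d-1$, so the coordinate sum inside $\Delta(\alpha_k)$ runs over $d-1$ terms. This is precisely what produces the coefficient $(2-d)$ and, in turn, the dependence on the parity of $d$ in the final answer.
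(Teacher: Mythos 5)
Your proof is correct and follows essentially the same route as the paper's: both expand $\sum_i\Delta(\alpha_i)$ using the fact that the symbols and each of the $d-1$ coordinates of a transversal each form a permutation of $I_n$, arrive at $(2-d)\,n(n+1)/2$, and finish with the same parity analysis modulo $n$. Your write-up is just a more explicit version of the one-line computation in the paper.
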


\begin{proof} We have,
\[\sum_{i=1}^n \Delta(\alpha_i)\CongN \sum_{i=1}^{n}i-(d-1)\sum_{i=1}^{n}i\CongN\frac{1}{2}(2-d)n(n+1),\] 
which is an even multiple of $n/2$ if $n$ is odd or $d$ is even, and an odd multiple otherwise.
\end{proof}

It was shown in \cite{IanNickTransversals} that for $r(r-1)<n$, any Latin square that agrees with the cyclic Latin square outside $r$ consecutive rows has no transversals. Making only slight adjustments to accommodate higher dimensions, we get:

\begin{thm}\label{ZeroThm}
Let $n$ be even and $d$ odd, and $r$ a positive integer such that $r(r-1)< n$. Then any $P\in\Lambda_1(d,n)$ for which $\corh(P)$ agrees with $C_{d-1,n}$ in all but $r$ consecutive hyperplanes has zero permanent. Furthermore, if we fix the $r$ hyperplanes on which the hypercubes can differ from $C_{d-1,n}$, then all linear combinations of the corresponding permutation matrices also have zero permanent.
\end{thm}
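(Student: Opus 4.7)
The plan is to apply the Delta lemma (\lref{l:delta}), extended to mixed transversals, and to combine it with a short counting argument exploiting the Latin property of each $H_j:=\corh(P_j)$ together with the fact that the bad hyperplanes are indexed by a set $R$ of $r$ \emph{consecutive} residues mod~$n$.

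For the second (stronger) statement, observe that a positive diagonal of a nonnegative linear combination $A=\sum_j c_jP_j$ consists of positions where at least one $P_j$ carries a $1$, hence corresponds to a mixed transversal of $\{H_j\}$. The proof of \lref{l:delta} uses only that in a transversal the $d-1$ positional coordinates and the symbols each enumerate $\{1,\dots,n\}$ exactly once; it does not care which hypercube contributed each element. Hence for $n$ even and $d$ odd, any mixed transversal $T=\{\alpha_1,\dots,\alpha_n\}$ must satisfy $\sum_i\Delta(\alpha_i)\CongN n/2$, and it suffices to show that no such mixed transversal exists.

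Assume, after relabelling, that $R=\{t,t+1,\dots,t+r-1\}$ indexes the bad hyperplanes in the $k$-th coordinate. If $\alpha_i=H_j(y_1,\dots,y_{d-1})$ has $y_k\notin R$, then $H_j$ agrees with $C_{d-1,n}$ at this position, so $\Delta(\alpha_i)\CongN 0$. Thus only the $r$ bad elements of the transversal contribute to the Delta sum, and their $y_k$-values exhaust $R$. For each such bad element, set $\beta_i\CongN\alpha_i-\sum_{\ell\ne k}y_\ell$, so that $\Delta(\alpha_i)\CongN\beta_i-y_k$. The key structural claim is $\beta_i\in R$ mod $n$: along the line of $H_j$ parallel to coordinate $k$ through $\alpha_i$, the $n-r$ good positions carry exactly the symbols $\sum_{\ell\ne k}y_\ell+y$ with $y\notin R$, so the Latin property forces the remaining $r$ bad positions on this line to realise precisely the symbols $\sum_{\ell\ne k}y_\ell+R$.

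Summing the $r$ bad contributions gives $\sum_i\Delta(\alpha_i)\CongN\sum_i\beta_i-\sum_{y\in R}y$, where each $\beta_i\in R$; because $R$ has diameter $r-1$ and the $y_k$-values exhaust $R$, this sum admits an integer representative in $[-r(r-1)/2,\,r(r-1)/2]$. The hypothesis $r(r-1)<n$, together with the evenness of $r(r-1)$ (a product of consecutive integers), yields $r(r-1)/2\le n/2-1$; but every integer congruent to $n/2$ mod $n$ has absolute value at least $n/2$, giving a contradiction. The main obstacle I anticipate is verifying the containment $\beta_i\in R$: this is where the Latin property of each individual $H_j$ along every coordinate-$k$ line through a bad transversal element is invoked, and where the hypotheses that the bad hyperplanes are both \emph{consecutive} and \emph{aligned with a single coordinate direction} are genuinely used. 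Once that structural point is in place, the finish reduces to the elementary size estimate against the Delta-lemma congruence.
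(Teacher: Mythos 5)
Your proposal is correct and follows essentially the same route as the paper's proof: only the $r$ elements in the bad hyperplanes contribute to the Delta sum, the Latin property along each line crossing the bad hyperplanes confines each such contribution (your $\beta_i-y_k$, the paper's $k_i$) to an interval of width $r-1$, and summing gives a quantity of absolute value at most $r(r-1)/2<n/2$, contradicting the value $n/2$ forced by the Delta lemma; the extension to mixed transversals for linear combinations is likewise identical. The only difference is notational bookkeeping ($\beta_i\in R$ versus the paper's symbol $\sigma_i$ at $\ell_i\cap P_1$), not substance.
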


\begin{proof} Suppose that $T=(\alpha_1,\dots,\alpha_n)$ is a transversal in $\corh(P)$ with elements chosen from hyperplanes $P_1,\dots,P_n$ respectively. We assume that $\corh(P)$ agrees with $C_{d-1,n}$ on $P_i$ for $1\le i\le n-r$. We know then that $\Delta(\alpha_i)\CongN 0$ for $1\le i\le n-r$. For $n-r+1\le i\le n$, the element $\alpha_i$ is contained in a single line $\ell_i$ that intersects $P_1,\dots, P_n$. Let $\sigma_i$ be the symbol of the element in $\ell_i\cap P_1$. Then the symbols $\sigma_i,\dots,\sigma_i+n-r-1 \pmod{n}$ appear at the intersections of $\ell_i$ and $P_1,\dots,P_{n-r}$ and the remaining symbols in $\ell_i$ are $\sigma_i+n-r,\dots, \sigma_i-1 \pmod{n}$. Also, $C_{d-1,n}$ has symbol $\sigma_i+i-1$ at the coordinates of $\alpha_i$. Define $k_i$ by $\Delta(\alpha_i)\CongN k_i\in(-\frac{n}{2},\frac{n}{2}]$. Then it follows from the above and $r\le n/2<i\le n$, that $n-r-i+1\le k_i\le n-i$. Hence, defining $K$ by $\sum_{i=1}^n \Delta(\alpha_i)\CongN K\in (-\frac{n}{2},\frac{n}{2}]$, we have
\[|K|\le\left|\sum_{i=n-r+1}^n k_i\right| \le 
\max\left(\sum_{i=n-r+1}^n|n-r-i+1|,\sum_{i=n-r+1}^n|n-i|\right)=
\frac{1}{2} r(r-1)<\frac{n}{2}.\]
But by \lref{l:delta}, $K=n/2$, so no such transversal $T$ exists.

Now suppose $A_1,\dots, A_m$ are permutation matrices with corresponding hypercubes $H_1,\dots, H_m$ that agree with $C_{d-1,n}$ on hyperplanes $P_1,\dots,P_{n-r}$ and (possibly) differ on consecutive hyperplanes $P_{n-r+1},\dots,P_n$. Consider the support of the sum $A=\Supp(A_1+\cdots+ A_m)$. The existence of a positive diagonal of $A$ is equivalent to the existence of a mixed transversal of $\{H_1,\dots,H_m\}$. Suppose $T=(\alpha_1,\dots,\alpha_n)$ is such a mixed transversal. Then $\Delta(\alpha_i)\CongN 0$ for $1\le i\le n-r$, and for $n-r+1\le i\le n$, defining $k_i$ by $\Delta(a_i)\CongN k_i\in(-\frac{n}{2},\frac{n}{2}]$, we again have $n-r-i+1\le k_i\le n-i$. So, again, no such mixed transversal can exist.
\end{proof}

As an example of \lref{l:unionofpolytopes} and \tref{ZeroThm}, let us
consider $\Zer_1(3,4)$.  There are 12 distinct hypercubes that can be
obtained from interchanging consecutive hyperplanes of $C_{2,4}$
(indexing the hyperplanes in any one direction modulo $4$, so that the
first and last planes are considered consecutive).  \tref{ZeroThm}
shows that each of these 12 hypercubes results in a point in
$\Zer(3,4)$ that is joined to $\corp(C_{2,4})$ by a line within
$\Zer(3,4)$.  An easy computation confirms that these 12 lines are
maximal polytopes within $\Zer(3,4)$ and that no other polytope
includes $\corp(C_{2,4})$.  For this, we use the catalogue of vertices
of $\Omega_1(3,4)$ that was identified in \cite{KeLiXiao}.  The only
vertices that are zeros of the permanent are the 432 vertices that are
equivalent to $\corp(C_{2,4})$, modulo permutations within each
coordinate. Hence each vertex of $\Zer_1(3,4)$ is connected by 12
lines to other vertices and is not in any higher dimensional
polytopes. In particular, $\Zer_1(3,4)$ is 1-dimensional, and it is
also easy to see from the above that it is connected.

We have just seen that $\Zer_1(3,4)$ contains a single species of
vertices. However, \tref{ZeroThm} implies that, as $n$ grows, the
number of vertices in $\Zer_1(d,n)$ grows rapidly.

\begin{cor}\label{ZeroVertices} 
For even $n$ and odd $d$, $\Zer_1(d,n)$ contains at least $n^{n^{3/2}(1/2-o(1))}$ species of vertices of $\Omega_1(d,n)$, for fixed $d$ as $n\to \infty$.
\end{cor}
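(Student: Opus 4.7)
The plan is to apply \tref{ZeroThm} with $r=\lfloor\sqrt{n}\rfloor$ (which satisfies $r(r-1)<n$), and to count the resulting $1$-permutation matrices in $\Zer_1(d,n)$. Since each species is an orbit of $\sym_n\wr\sym_d$, of size at most $d!(n!)^d=n^{o(n^{3/2})}$ for fixed $d$, any lower bound of $n^{n^{3/2}(1/2-o(1))}$ on the number of hypercubes in our construction transfers directly to the number of species.

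I would first reduce the statement to the case $d=3$. Fix the modified hyperplanes to be $x_{d-1}\in\{n-r+1,\dots,n\}$ and write $H_k$ for the slice $x_{d-1}=k$ of $H$. For $d\ge4$, restrict to the sub-family whose modified slices take the form
\[H_k(x_1,\dots,x_{d-2})\CongN\sigma_{k-n+r}(x_{d-2})+x_1+\cdots+x_{d-3},\]
with $\sigma_1,\dots,\sigma_r\in\sym_n$ arbitrary. A direct check confirms that each such $H_k$ is a $(d-2)$-dimensional Latin hypercube, and the line constraint of $H$ in direction $d-1$ collapses to the single requirement that $\{\sigma_1(y),\dots,\sigma_r(y)\}\equiv\{y-r+1,\dots,y\}\pmod{n}$ for every $y\in I_n$. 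Distinct $r$-tuples $(\sigma_1,\dots,\sigma_r)$ give distinct $H$, so in every dimension $d\ge3$ it suffices to count such tuples, equivalently the ordered $1$-factorisations of the cyclic $r$-regular bipartite band graph $B$ on $n+n$ vertices.

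For the count I would peel perfect matchings greedily. The normalised bi-adjacency matrix of $B$ is doubly stochastic, so by \tref{t:vdW} its permanent is at least $r^n\cdot n!/n^n$, and the same bound persists on the $(r-k)$-regular subgraph left after removing $k$ disjoint matchings. Multiplying across $k=0,1,\dots,r-1$ yields at least $(r!)^n(n!/n^n)^r$ ordered $1$-factorisations. For $r=\lfloor\sqrt{n}\rfloor$, Stirling's approximation gives $n\log(r!)+r\log(n!/n^n)=\tfrac12 n^{3/2}\log n-O(n^{3/2})$, so the count is at least $n^{n^{3/2}(1/2-o(1))}$, which completes the bound after the division by $|\sym_n\wr\sym_d|$.

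The main obstacle is the tight trade-off in the size of $r$: \tref{ZeroThm} forces $r\le\sqrt{n}$, while $(r!)^n$ must reach $n^{n^{3/2}/2}$, which by Stirling requires $r$ of order $\sqrt{n}$. The choice $r=\lfloor\sqrt{n}\rfloor$ is essentially forced, and every slack term—the $(n!/n^n)^r$ factor from van der Waerden, the lower-order Stirling corrections, and the division by the group—contributes at most $n^{o(n^{3/2})}$ and so vanishes into the final $o(1)$.
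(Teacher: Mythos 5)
Your proof is correct, and its skeleton matches the paper's: apply \tref{ZeroThm} with $r\approx\sqrt{n}$, produce many Latin hypercubes agreeing with $C_{d-1,n}$ outside $r$ consecutive hyperplanes, and divide by $|\sym_n\wr\sym_d|=n^{O(n)}$ to pass from hypercubes to species. Where you genuinely diverge is in the counting step. The paper simply cites \cite{IanNickTransversals} for the fact that there are $n^{n^{3/2}(1/2-o(1))}$ species of Latin squares agreeing with the cyclic square on the first $n-r$ rows, and lifts each such square $L$ to a hypercube via $H(x_1,\dots,x_{d-1})\CongN L(x_1,x_2)+\sum_{i\ge3}x_i$. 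You instead re-derive a sufficient lower bound from scratch: you parametrise the modified slices by an ordered $1$-factorisation of the $r$-regular circulant band bipartite graph, and bound the number of such factorisations by greedily peeling perfect matchings, using \tref{t:vdW} to get at least $k^n\,n!/n^n$ matchings at each stage of regularity $k$; the resulting $(r!)^n(n!/n^n)^r$ gives exactly the exponent $n^{3/2}(1/2-o(1))$. Your family is a proper subfamily of the paper's (only slices of the product form $\sigma_j(x_{d-2})+x_1+\cdots+x_{d-3}$), but that costs nothing for a lower bound, and I verified that the line condition in direction $d-1$ does collapse to your constraint on $\{\sigma_1(y),\dots,\sigma_r(y)\}$ independently of the other coordinates. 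The trade-off is that your argument is self-contained (modulo Egorychev--Falikman, which the paper already states as \tref{t:vdW}) but longer, whereas the paper's is a one-line appeal to an external count; your version also makes transparent why the exponent $\tfrac12 n^{3/2}\log n$ is forced by the constraint $r(r-1)<n$.
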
 

\begin{proof} 
From \cite{IanNickTransversals} we know that the number of species of Latin squares that agree with the cyclic Latin square of order $n$ on the first $n-r$ rows is $n^{n^{3/2}(1/2-o(1))}$, where $r$ is the largest integer satisfying $r(r-1)<n$ as $n\to \infty$. For each such Latin square $L$, we can construct a Latin hypercube of dimension $d-1$ by $H(x_1,\dots,x_{d-1})\CongN L(x_1,x_2)+\sum_{i=3}^{d-1}x_i$. The resulting Latin hypercube $H$ agrees with $C_{d-1,n}$ on all but $r$ consecutive hyperplanes. The number of Latin hypercubes of dimension $d-1$ and order $n$ in a species is at most $|\sym_n\wr\sym_d|=d!(n!)^d=n^{O(n)}$ for fixed $d$, giving the desired bound. 
\end{proof}

A \emph{Latin subrectangle} of a Latin square $L$ is a submatrix of $L$ in which each row is a permutation of the same set of symbols. Let a \emph{$k$-cycle} (sometimes a \emph{$k$-row-cycle}) in a Latin square be a $2\times k$ Latin subrectangle that contains no $2\times k'$ Latin subrectangle for $k'< k$. A $2$-cycle is called an \emph{intercalate}. By \emph{switching a cycle} in a Latin square we will mean altering the cycle by interchanging its rows, thereby creating a different Latin square.

We will say that two Latin hypercubes are \emph{linearly independent} if their corresponding permutation matrices are linearly independent. Our next result gives a lower bound on the dimension of $\Zer_1(d,n)$. For comparison, the dimension of $\Omega_1(d,n)$ is $(n-1)^d$.

\begin{cor}\label{ZeroDim}
  There exists a constant $c$ such that the dimension of $\Zer_1(d,n)$ is
  at least $cn^{3/2}$ for all even $n>2$ and odd $d>1$.
\end{cor}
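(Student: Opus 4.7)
By \tref{ZeroThm}, if we fix a set of $r$ consecutive hyperplanes in some coordinate direction, then the convex hull of all $1$-permutation matrices whose corresponding Latin hypercubes agree with $C_{d-1,n}$ outside these hyperplanes lies in $\Zer_1(d,n)$. My plan is to take $r$ to be the largest positive integer with $r(r-1)<n$, so that $r\ge c_1\sqrt{n}$ for some absolute constant $c_1>0$, and then to exhibit $k+1$ affinely independent matrices of the above form with $k\ge cn^{3/2}$.

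I would obtain these matrices by finding $k$ pairwise disjoint local ``switches'' of the cyclic hypercube $C_{d-1,n}$, each supported entirely within the fixed $r$-hyperplane region. Letting $H_0=C_{d-1,n}$ and, for $1\le j\le k$, letting $H_j$ denote the Latin hypercube obtained from $H_0$ by applying only the $j$th switch, the matrix $\corp(H_j)-\corp(H_0)$ has nonzero entries only at positions $(x_1,\dots,x_d)$ with $(x_1,\dots,x_{d-1})$ in the set $R_j\subset I_n^{d-1}$ of cells altered by switch $j$. Since the $R_j$ are pairwise disjoint, the $k$ difference matrices have pairwise disjoint supports and so are linearly independent. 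Hence $\corp(H_0),\corp(H_1),\dots,\corp(H_k)$ are affinely independent, and their convex hull is the desired $k$-dimensional polytope inside $\Zer_1(d,n)$.

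To construct disjoint switches in dimension $d-1$ I would reduce to dimension $2$ via the lifting used in the proof of \cyref{ZeroVertices}: any $2$-dimensional Latin square $L$ that agrees with $C_n$ outside $r$ rows lifts to a Latin hypercube
\[
H_L(x_1,\ldots,x_{d-1})\CongN L(x_1,x_2)+\sum_{i=3}^{d-1}x_i,
\]
and any local modification of $L$ supported on a region $R\subset I_n\times I_n$ lifts to a modification of $H_L$ supported on $R\times I_n^{d-3}$. Thus disjoint modifications of $L$ yield disjoint modifications of $H_L$, and the problem reduces to producing many disjoint switches of $C_n$ in its last $r$ rows.

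The main obstacle is producing $cn^{3/2}$ pairwise disjoint switches of $C_n$ confined to its last $r$ rows. I would extract these from the construction behind the count $n^{n^{3/2}(1/2-o(1))}$ in \cite{IanNickTransversals} that was already invoked in \cyref{ZeroVertices}: that construction produces many distinct Latin squares by local cycle trades within the last $r$ rows of $C_n$, and since each trade has bounded size while the available region contains $rn=\Theta(n^{3/2})$ cells, a greedy selection should extract the required number of trades with pairwise disjoint supports, completing the proof.
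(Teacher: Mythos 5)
There is a genuine gap at the final and crucial step: the claim that the trades behind the count in \cite{IanNickTransversals} ``have bounded size'', so that a greedy selection extracts $\Theta(n^{3/2})$ pairwise disjoint trades inside the band of $r$ consecutive rows. This is false. An intercalate of $C_{2,n}$ on rows $i,j$ and columns $k,l$ forces $i-j\CongN k-l\CongN n/2$, so no intercalate (and hence no $4$-cell trade) lies within $r\approx\sqrt n$ consecutive rows. More generally, if $(T,T')$ is any Latin trade of $C_{2,n}$ confined to rows $\{m,\dots,m+r-1\}$ and we set $\delta(i,j)=T'(i,j)-i-j$, then the column condition forces $i+\delta(i,j)$ to be another used row of column $j$, so in the topmost row $m$ every $\delta(m,j)$ lies in $\{1,\dots,r-1\}$; the row condition forces $\sum_j\delta(m,j)\CongN 0$, and a positive sum of terms each less than $r$ that is divisible by $n$ requires at least $n/(r-1)$ terms. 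Hence every trade confined to the band has size at least $n/(r-1)=\Omega(\sqrt n)$, and the band of $rn=\Theta(n^{3/2})$ cells admits only $O(n)$ pairwise disjoint trades. Your approach therefore caps out at dimension $O(n)$, and for $d=3$ (where the lifting is vacuous) it cannot be rescued within its own framework.

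The remainder of your plan (the lifting $H_L(x_1,\dots,x_{d-1})\CongN L(x_1,x_2)+\sum_{i\ge3}x_i$, deducing affine independence from linear independence of the differences, and invoking \tref{ZeroThm}) is sound and is essentially what the paper does. The paper gets around the obstacle above precisely by abandoning disjointness: each $L_{a,b}$ is obtained by switching a long $\frac n2$-cycle between rows $2a-1$ and $2a+1$ and then an intercalate this creates through row $2a$, so the supports of the modifications overlap heavily, but each $L_{a,b}$ is the unique member of the family that differs from $C_{2,n}$ in the cell $(2a,2b)$. This ``private cell'' gives linear independence of the difference matrices without disjoint supports and yields $\frac n2\floor{\frac{r-1}{2}}=\Theta(n^{3/2})$ affinely independent vertices. (A minor further point: for $n\in\{4,6\}$ one has $\floor{\frac{r-1}{2}}=0$, so these small cases need the separate one-dimensional argument the paper supplies.)
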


\begin{proof} 
First suppose that $n\in\{4,6\}$. Let $D$ be the Latin hypercube
formed from $C_{d-1,n}$ by interchanging two consecutive
hyperplanes. By \tref{ZeroThm}, any linear combination of $\corp(D)$
and $\corp(C_{d-1,n})$ has zero permanent. Thus $\Zer_1(d,n)$ has
dimension at least $1$.

Thus we may assume for the remainder of the proof that $n>6$.  Let $r$
be as large as possible, subject to $r(r-1)<n$. We bound from below
the size of the largest linearly independent set of Latin squares that
differ from the cyclic Latin square $C_{2,n}$ only within the first
$r$ rows.

For integers $a$ and $b$ we define a new Latin square $L_{a,b}$ that
differs from $C_{2,n}$ by switching two cycles. Start by switching the
$\frac{n}{2}$-cycle in $C_{2,n}$ between rows $2a-1$ and $2a+1$ that
contains $C_{2,n}(2a-1,2b-1)$, and then switch the newly created
intercalate containing $C_{2,n}(2a,2b-1)$ and $C_{2,n}(2a,2b)$. This allows us
to swap the symbols in cells $(2a,2b-1)$ and $(2a,2b)$ while leaving
the rest of row $2a$ untouched, and otherwise only changing rows
$2a-1$ and $2a+1$. A given $L_{a,b}\in S=\{L_{a,b}:1\le
a\le\floor{\frac{r-1}{2}},1\le b\le\frac{n}{2}\}$ is the only square
in $S$ to differ from $C_{2,n}$ in cell $(2a,2b)$. Hence, $S$ is a linearly
independent set of $\frac{n}{2}\floor{\frac{r-1}{2}}$ Latin squares
that agree with $C_{2,n}$ on all but the first $r$ rows.

We then fill out the Latin squares in $S$ into $d$-dimensional Latin
hypercubes as described in Corollary \ref{ZeroVertices}. These are
clearly still linearly independent and, by Theorem \ref{ZeroThm}, all
linear combinations of the corresponding permutation matrices have
zero permanent. Hence, the zero set must have dimension at least
$\frac{n}{2}\floor{\frac{r-1}{2}}$. The result follows. 
\end{proof}

Note that \cyref{ZeroDim} cannot be extended to $n=2$.  The polytope
$\Omega_1(d,2)$ is 1-dimensional and its two vertices are the
permutation matrices. When $d$ is odd, $\Zer_1(d,2)$ consists of the
permutation matrices, but any positive linear combination of them has
only positive entries and hence has positive permanent.  It follows
that $\Zer_1(d,2)$ is $0$-dimensional in this case.

\cjref{cj:centreface} implies that there are only finitely many local
extrema of the permanent on $\Omega_1(d,n)$. Every zero is a local
extremum, since the permanent is non-negative.  Thus \cyref{ZeroDim}
tells us that there are uncountably many local extrema.

\begin{cor}\label{cy:Tarcjfalse} 
\cjref{cj:centreface} is false for even $n$ and odd $d$.
\end{cor}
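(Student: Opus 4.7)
The plan is to derive the corollary directly from \cyref{ZeroDim} together with two elementary observations: that zeros of the permanent are local minima, and that a polytope only has finitely many faces. The proof should be very short since the heavy lifting has already been done.

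First I would argue that \cjref{cj:centreface}, if true, would force the set of local extrema of the permanent on $\Omega_1(d,n)$ to be \emph{finite}. This is because $\Omega_1(d,n)$ is a bounded convex polytope and so has finitely many faces (of each dimension, including the vertices, which are the $0$-dimensional faces), and each face has a unique centre. The union of the vertex set with the set of face centres is therefore finite.

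Next I would observe that every matrix $A\in\Zer_1(d,n)$ is a local (indeed global) minimum of the permanent on $\Omega_1(d,n)$, because the permanent is a non-negative function on this polytope and $\Per(A)=0$. Hence every point of $\Zer_1(d,n)$ is a local extremum.

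Finally I would invoke \cyref{ZeroDim}: for even $n$ with $n>2$ and odd $d>1$, the zero set $\Zer_1(d,n)$ contains a polytope of dimension at least $cn^{3/2}\ge1$, and therefore contains uncountably many points. These uncountably many local extrema cannot all lie among the finitely many vertices and face centres of $\Omega_1(d,n)$, which contradicts \cjref{cj:centreface}. There is no real obstacle here; the only minor point to note is the boundary case $n=2$, for which \cyref{ZeroDim} fails (as already remarked after its proof), so the conclusion should be understood in the regime where the earlier dimension bound applies.
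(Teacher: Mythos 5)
Your argument is correct and is essentially identical to the paper's: \cjref{cj:centreface} would confine the local extrema to the finitely many vertices and face centres of $\Omega_1(d,n)$, every zero of the non-negative permanent is a local extremum, and \cyref{ZeroDim} supplies a positive-dimensional (hence uncountable) polytope of zeros. Your remark about the excluded case $n=2$ matches the paper's own caveat following \cyref{ZeroDim}.
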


\cyref{ZeroDim} establishes that the dimension of $\Zer_1(d,n)$ is
greater than zero when $n>2$ is even and $d$ is odd.  It is likely that
$\Zer_1(d,n)$ is much larger than the bound that we have given, so we
have not worked hard to find the best constant $c$.  It is also worth
remarking that if \cjref{ZeroConj} holds then the zero set is empty
except when $n$ is even and $d$ is odd, which is the case we have 
concentrated on in this section.

\section{Counterexamples to the Dow-Gibson conjecture}\label{s:DG}

In this section we investigate counterexamples to \cjref{DGconj} in odd dimensions.
\tref{JMinMax} strongly suggests, but does not prove, that
\cjref{DGconj} should fail in odd dimensions. The issue is that
\tref{JMinMax} is a statement about 1-polystochastic matrices, whereas
\cjref{DGconj} deals with $(d-1)$-polystochastic matrices.  It is
plausible that $n^{-1}J^d_n$ may be the only matrix that is
1-polystochastic and which (up to scaling) is in the convex hull of
the $(d-1)$-permutation matrices.  As an aside, it certainly is in
that hull, because it is the average of all $(d-1)$-permutation
matrices, by symmetry.

If we can find any $1$-polystochastic matrix \emph{other than
  $n^{-1}J_n^d$} which (up to scaling) is in the convex hull of the
$(d-1)$-permutation matrices, then \tref{JMinMax} will imply that
\cjref{DGconj} fails in odd dimensions. That is what we do in this
section.

\begin{lemma}\label{DecompSquares} 
A permutation matrix $P\in \Lambda_1(3,n)$ has a decomposition $P=Q_1+\cdots+Q_n$ into matrices $Q_i\in\Lambda_2(3,n)$ if and only if $\corh(P)$ has an orthogonal mate.
\end{lemma}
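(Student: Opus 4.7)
The plan is to translate the decomposition condition into a combinatorial condition on the Latin square $L=\corh(P)$, then recognise it as the existence of an orthogonal mate. The key observation is that a $2$-permutation matrix $Q\in\Lambda_2(3,n)$ contains exactly $n$ ones, and these ones form a diagonal in the sense of the introduction (one in each hyperplane in every direction means all three coordinates of the $n$ ones are distinct). Since $P$ is a $(0,1)$-matrix with $n^2$ ones, any decomposition $P=Q_1+\cdots+Q_n$ with $Q_i\in\Lambda_2(3,n)$ must in fact be a partition of the support of $P$ into $n$ diagonals.

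First I would set up the correspondence. The support of $P$ is exactly $\{(a,b,L(a,b)):a,b\in I_n\}$. Any partition of these $n^2$ positions into $n$ labelled parts can be encoded by a function $M:I_n\times I_n\to I_n$, where $M(a,b)=i$ if $(a,b,L(a,b))$ lies in $Q_i$. Thus I would reduce the statement to: $P$ has the desired decomposition if and only if the corresponding $M$ is a Latin square orthogonal to $L$.

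Next I would translate the three hyperplane conditions for each $Q_i$ into conditions on $M$. Fixing the first coordinate $a$, the requirement that $Q_i$ meets the hyperplane $\{a\}\times I_n\times I_n$ in exactly one cell becomes: for each row $a$ and each label $i$, exactly one $b$ satisfies $M(a,b)=i$; equivalently, $M$ is latin in rows. The symmetric argument with the second coordinate gives that $M$ is latin in columns, so $M$ is a Latin square. Fixing the third coordinate $c$, the requirement that $Q_i$ has exactly one element in the hyperplane $I_n\times I_n\times\{c\}$ says: for every pair $(c,i)\in I_n\times I_n$ there is exactly one cell $(a,b)$ with $L(a,b)=c$ and $M(a,b)=i$. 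This is precisely orthogonality of $L$ and $M$.

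For the converse direction I would reverse the construction: given an orthogonal mate $M$ of $L$, define $Q_i(a,b,c)=1$ iff $M(a,b)=i$ and $L(a,b)=c$, and verify directly, using the three properties of $(L,M)$ being a pair of orthogonal Latin squares, that each $Q_i$ lies in $\Lambda_2(3,n)$ and that $\sum_i Q_i=P$. There is no real obstacle here; the only slightly delicate point is confirming that the third (symbol) condition on $Q_i$ translates to orthogonality rather than to some weaker Latin-square-like condition, and I would record this carefully since this is the whole content of the lemma.
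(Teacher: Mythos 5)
Your proposal is correct and follows essentially the same route as the paper: both identify the matrices $Q_i\in\Lambda_2(3,n)$ with support inside $\Supp(P)$ as transversals of $L=\corh(P)$, and both reduce the decomposition to a partition of $L$ into $n$ disjoint transversals. The only difference is that the paper simply cites the standard fact that such a partition is equivalent to $L$ having an orthogonal mate, whereas you prove that equivalence inline via the labelling function $M$; this makes your write-up self-contained but adds no new idea.
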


\begin{proof}
A Latin square has an orthogonal mate if and only if it can be covered by a set of mutually disjoint transversals. Let $L=\corh(P)$. Recall that each transversal in $L$ corresponds to a positive diagonal in $P$, that is, a matrix $Q\in\Lambda_2(3,n)$ with $\Supp(Q)\subseteq\Supp(P)$, and vice versa. That transversals $T_1,\dots, T_n$ are mutually disjoint and cover $L$ is equivalent to the corresponding matrices $Q_1,\dots,Q_n$ summing to $P$, that is, $P=Q_1+\cdots+Q_n$.
\end{proof}

\begin{lemma}\label{DG3}  
\cjref{DGconj} is false for $d=3$ and all $n>2$.
\end{lemma}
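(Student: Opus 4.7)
The plan is to follow the recipe outlined in the paragraph preceding the lemma: find a $1$-polystochastic matrix $M\ne n^{-1}J_n^3$ with $M/n\in\Hull(\Lambda_2(3,n))$, and then move along the segment from $n^{-2}J_n^3$ toward $M/n$ inside $\Hull(\Lambda_2(3,n))$. I would set
\[
A_t=(1-t)n^{-2}J_n^3+tM/n\in\Hull(\Lambda_2(3,n))
\]
for $t\in[0,1]$, so that $nA_t\in\Omega_1(3,n)$ with $nA_0=n^{-1}J_n^3$ and $nA_1=M$. Since $d=3$ is odd, \tref{JMinMax} says $n^{-1}J_n^3$ is a local maximum of $\Per$ on $\Omega_1(3,n)$. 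The polynomial $t\mapsto\Per(nA_t)$ will be non-constant provided $\Per(M)\ne\Per(n^{-1}J_n^3)=(n!)^2/n^n$, and $M$ can be chosen to ensure this. Combining the local-max property with non-constancy gives $\Per(nA_t)<\Per(n^{-1}J_n^3)$ for some small $t>0$, and then $n$-homogeneity of the permanent translates this to $\Per(A_t)<(n!/n^n)^2$, contradicting \cjref{DGconj}.

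For $n>2$ with $n\ne 6$, I would take $M=P=\corp(L)$, where $L$ is a Latin square of order $n$ admitting an orthogonal mate (available by the classical MOLS existence results). Then \lref{DecompSquares} gives $P=Q_1+\cdots+Q_n$ with $Q_i\in\Lambda_2(3,n)$, so $P/n=\sum_i(1/n)Q_i\in\Hull(\Lambda_2(3,n))$. The center $n^{-2}J_n^3$ belongs to $\Hull(\Lambda_2(3,n))$ because it equals the uniform average over the $(n!)^2$ elements of $\Lambda_2(3,n)$, each position lying in exactly $((n-1)!)^2$ diagonals by $\sym_n\wr\sym_3$-symmetry. A generic choice of $L$ makes $\Per(P)$, the integer number of transversals of $L$, unequal to the (generically non-integer) rational $(n!)^2/n^n$, securing the non-constancy condition.

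The hard part will be the case $n=6$, since by the Euler--Tarry theorem no Latin square of order $6$ has an orthogonal mate, and \lref{DecompSquares} then precludes using any single $1$-permutation matrix as $M$. To handle $n=6$ I would look for a non-vertex $1$-polystochastic matrix instead -- for example, a convex combination $M=\sum_k a_k\corp(L_k)$ of several $1$-permutation matrices whose joint support admits a positive decomposition into transversals of the $L_k$ (equivalently, a fractional ``rainbow'' orthogonal mate) yielding $M/6\in\Hull(\Lambda_2(3,6))$. Exhibiting such an $M$ is a small-order combinatorial problem amenable to explicit construction or to an LP feasibility check on the cell-transversal incidence matrix, after which the general argument of the first two paragraphs completes the proof.
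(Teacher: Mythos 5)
Your overall strategy is exactly the paper's: produce a $1$-polystochastic $M\ne n^{-1}J_n^3$ with $n^{-1}M\in\Hull(\Lambda_2(3,n))$, then invoke \tref{JMinMax} along the segment from $n^{-2}J_n^3$ towards $n^{-1}M$ and rescale by homogeneity of the permanent. For $n\ne 6$ your choice $M=\corp(L)$ with $L$ having an orthogonal mate, combined with \lref{DecompSquares} and the Bose--Shrikhande--Parker existence result, is precisely what the paper does; your extra care about non-constancy of $t\mapsto\Per(nA_t)$ is harmless and in fact works for every such $L$, since $(n!)^2/n^n$ is not an integer for any $n\ge 3$ while the transversal count of $L$ is.

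The genuine gap is $n=6$. You correctly observe that no Latin square of order $6$ has an orthogonal mate, so no single $1$-permutation matrix can serve as $M$, and you correctly identify what is needed instead: a matrix in $\Omega_1(3,6)$ whose support admits a fractional decomposition into elements of $\Lambda_2(3,6)$ (a ``fractional mixed orthogonal mate''). But you do not produce one; you only assert that an explicit construction or an LP feasibility check would find it. That existence claim is the entire content of the lemma at $n=6$: as the paper remarks immediately before this lemma, it is a priori plausible that $n^{-1}J_n^d$ is the \emph{only} $1$-polystochastic matrix lying (up to scaling) in $\Hull(\Lambda_{d-1}(d,n))$, in which case the whole approach would yield nothing. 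The paper closes this by exhibiting an explicit $A_6\in\Omega_1(3,6)$ written as a positive combination of eight elements of $\Lambda_2(3,6)$, four with coefficient $1$ and four with coefficient $\tfrac12$, so that $6^{-1}A_6\in\Hull(\Lambda_2(3,6))$ and $A_6\ne 6^{-1}J_6^3$. Until you supply such a matrix (or an equivalent certificate of feasibility), the proof is incomplete for $n=6$ and the lemma as stated, for all $n>2$, is not established.
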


\begin{proof} 
By \cite{MOLSpairs}, for $2<n\ne 6$ there exists a pair of orthogonal Latin squares $L_1$ and $L_2$ of order $n$, for which $\corp(L_1)$ and $\corp(L_2)$ decompose into elements of $\Lambda_2(3,n)$ by \lref{DecompSquares}. For $n=6$, we now give an example of a matrix $A_6\in\Omega_1(3,6)$ that is a convex combination of elements of $\Lambda_2(3,6)$:
\begin{align*} A_6&=\left[\large\begin{smallmatrix}1&0&0&0&0&0\\0&1&0&0&0&0\\0&0&1&0&0&0\\0&0&0&1&0&0\\0&0&0&0&1&0\\ 0&0&0&0&0&1\end{smallmatrix}\right| \large\begin{smallmatrix}0&0.5&0&0&0.5&0\\1&0&0&0&0&0\\0&0&0&1&0&0\\0&0&0&0&0&1\\0&0&1&0&0&0\\ 0&0.5&0&0&0.5&0\end{smallmatrix}\left| \begin{smallmatrix}0&0&1&0&0&0\\0&0&0&0&0&1\\1&0&0&0&0&0\\0&0&0&0&1&0\\0&1&0&0&0&0\\ 0&0&0&1&0&0\end{smallmatrix}\right| \begin{smallmatrix}0&0&0&1&0&0\\0&0&0&0&1&0\\0&1&0&0&0&0\\1&0&0&0&0&0\\0&0&0&0&0&1\\ 0&0&1&0&0&0\end{smallmatrix}\left| \begin{smallmatrix}0&0.5&0&0&0.5&0\\0&0&0&1&0&0\\0&0&0&0&0&1\\0&0&1&0&0&0\\1&0&0&0&0&0\\ 0&0.5&0&0&0.5&0\end{smallmatrix}\right|\left. \begin{smallmatrix}0&0&0&0&0&1\\0&0&1&0&0&0\\0&0&0&0&1&0\\0&1&0&0&0&0\\0&0&0&1&0&0\\ 1&0&0&0&0&0\end{smallmatrix}\right]\\
&= \left[E_{11}\right| E_{53}\left| E_{64}\right|E_{25}\left|E_{36}\right|\left. E_{42}\right]+ \left[E_{55}\right| E_{46}\left| E_{13}\right|E_{32}\left|E_{24}\right|\left. E_{61}\right]\\
&\quad+ \left[E_{66}\right| E_{21}\left| E_{52}\right|E_{14}\left|E_{43}\right|\left. E_{35}\right]+ \left[E_{22}\right| E_{34}\left| E_{45}\right|E_{63}\left|E_{51}\right|\left. E_{16}\right]\\
&\quad+\frac{1}{2} \left[E_{33}\right| E_{12}\left| E_{26}\right|E_{41}\left|E_{65}\right|\left. E_{54}\right]+\frac{1}{2} \left[E_{33}\right| E_{15}\left| E_{26}\right|E_{41}\left|E_{62}\right|\left. E_{54}\right]\\
&\quad+\frac{1}{2} \left[E_{44}\right| E_{65}\left| E_{31}\right|E_{56}\left|E_{12}\right|\left. E_{23}\right]+\frac{1}{2} \left[E_{44}\right| E_{62}\left| E_{31}\right|E_{56}\left|E_{15}\right|\left. E_{23}\right].
\end{align*}
Here $E_{ij}$ is the $6\times 6$ matrix with a single $1$ in cell $(i,j)$ and zeros elsewhere, and 3-dimensional matrices are specified by listing their layers  separated by vertical bars. 

For $n>2$, we have shown there exists $A_n\in\Omega_1(3,n)$ such that $n^{-1}A_n\in\Hull(\Lambda_2(3,n))$ and $A_n\ne n^{-1}J_n^3$. By Theorem \ref{JMinMax}, we know $\Per\left((1-\epsilon)n^{-1}J_n^3+\epsilon A_n\right)<\Per(n^{-1}J_n^3)$ for $\epsilon$ sufficiently small, and hence $\Per\left((1-\epsilon)n^{-2}J_n^3+\epsilon n^{-1}A_n\right)<\Per(n^{-2}J_n^3)$ for $\epsilon$ sufficiently small. 
\end{proof}

To address the rest of the odd dimensional case, we use the following notion of the product of two higher dimensional matrices. By this definition, the product of two matrices increases in dimension while preserving enough of the properties of being polystochastic or a permutation matrix to get the same conclusion as in Lemma \ref{DG3}.

For the following we write $a_{i_1\cdots i_d}$ as shorthand for $A(i_1,\dots,i_d)$.
\begin{Def} 
Let $A\in M(p,n)$ and $B\in M(q,n)$. Define $A\times B\in M(p+q-2,n)$ by
\[(A\times B)_{i_1\cdots i_{p+q-2}}=\sum_j a_{i_1\cdots i_{p-1}j}b_{ji_p\cdots i_{p+q-2}}.\]
\end{Def}

Viewing $A$ and $B$ as the arrays corresponding to tensors under some choice of basis of a vector space, this multiplication corresponds to a tensor contraction of the tensor product of $A$ and $B$.
Multiplication of higher dimensional matrices is associative and distributive.

\begin{lemma}\label{MultPoly} 
Suppose $A\in\Omega_1(p,n)$ and $B\in\Omega_1(q,n)$. Then $A\times B\in\Omega_1(p+q-2,n)$.
\end{lemma}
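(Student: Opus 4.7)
The plan is to verify the two defining conditions of $1$-polystochasticity for $A\times B$: nonnegativity, and that each line sum equals $1$. Nonnegativity is immediate from the definition of $A\times B$, since each entry is a sum of products of nonnegative entries of $A$ and $B$.

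For the line-sum condition, I would fix a line of $A\times B$: that means choosing a coordinate index $k\in\{1,\dots,p+q-2\}$ to vary and fixing all remaining coordinates. The key observation is that the first $p-1$ coordinates of $A\times B$ are inherited from $A$ and the last $q-1$ coordinates are inherited from $B$, and moreover the contraction index $j$ is summed independently of the external coordinates. So I would split into two cases according to whether $k\le p-1$ or $k\ge p$.

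In the first case, $k\le p-1$, I interchange the order of summation to write
\[
\sum_{i_k}(A\times B)_{i_1\cdots i_{p+q-2}}
=\sum_j b_{j\,i_p\cdots i_{p+q-2}}\Big(\sum_{i_k}a_{i_1\cdots i_{p-1}j}\Big).
\]
The inner sum is a line sum of $A$ (varying $i_k$ with all other coordinates, including the last one equal to $j$, held fixed), so it equals $1$ because $A\in\Omega_1(p,n)$. What remains is $\sum_j b_{j\,i_p\cdots i_{p+q-2}}$, a line sum of $B$ along its first coordinate, which equals $1$ because $B\in\Omega_1(q,n)$. The second case $k\ge p$ is symmetric: I pull the $\sum_{i_k}$ inside so that it becomes a line sum of $B$ (with first coordinate pinned to $j$), yielding $1$, leaving $\sum_j a_{i_1\cdots i_{p-1}j}=1$ as a line sum of $A$.

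There is no real obstacle here; the only thing to be careful about is bookkeeping, namely that when $i_k$ varies and the remaining coordinates are fixed, we really do obtain a bona fide line of $A$ (respectively $B$) after fixing the contraction variable $j$, and that the separation of variables between $A$ and $B$ in the definition of $A\times B$ lets us commute the two sums freely.
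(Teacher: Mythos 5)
Your proof is correct and is essentially identical to the paper's: both split into the two cases according to whether the varying coordinate comes from the $A$-part or the $B$-part, swap the order of summation, and apply the line-sum condition of one factor (with the contraction index held fixed) followed by that of the other. No further comment is needed.
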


\begin{proof} 
First consider a line sum in $C=A\times B$ with free index $i_r\in \{i_1,\dots, i_{p-1}\}$. Then 
\begin{align*}\sum_{i_r}c_{i_1\cdots i_{p+q-2}}=\sum_{i_r,j}a_{i_1 \cdots i_{p-1}j}b_{ji_p\cdots i_{p+q-2}}=\sum_{j}b_{ji_p\cdots i_{p+q-2}}=1.
\end{align*} 
Similarly for a line with free index $i_r\in \{i_p,\dots, i_{p+q-2}\}$, 
\begin{align*}\sum_{i_r}c_{i_1\cdots i_{p+q-2}}&=\sum_{i_r,j}a_{i_1\cdots i_{p-1}j}b_{ji_p\cdots i_{p+q-2}}=\sum_ja_{i_1\cdots i_{p-1}j}=1.\qedhere
\end{align*}
\end{proof}

\begin{lemma} 
Suppose $A\in\Omega_s(p,n)$ and $B\in\Omega_t(q,n)$. Then $A\times B\in\alpha\Omega_r(p+q-2,n)$ for $r\ge \max\{p+t-2,q+s-2\}$ and $\alpha=n^{r-s-t+1}$.
\end{lemma}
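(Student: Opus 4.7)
The plan is to verify directly that every $r$-plane sum of $C := A \times B$ equals $\alpha = n^{r-s-t+1}$; non-negativity of $C$ is immediate from the non-negativity of $A$ and $B$. The key preliminary observation is that if a matrix lies in $\Omega_u(d,n)$, then the sum over any $k$-plane with $k \geq u$ equals $n^{k-u}$, since such a $k$-plane partitions into $n^{k-u}$ disjoint $u$-planes, each summing to $1$.

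I would then fix an $r$-plane of $C$, determined by a set $F$ of $r$ free coordinates (the remaining $p+q-2-r$ being held fixed). Split $F = F_1 \sqcup F_2$, where $F_1$ lies among the first $p-1$ coordinates (the $A$-side) and $F_2$ among the last $q-1$ (the $B$-side), and set $r_i := |F_i|$, so $r_1 + r_2 = r$. The hypothesis $r \geq q + s - 2$ combined with $r_2 \leq q-1$ forces $r_1 \geq s-1$, and symmetrically $r_2 \geq t-1$. By the definition of $A \times B$,
\[\sum_F c = \sum_{j=1}^n \Bigl(\sum_{F_1} a_{i_1\cdots i_{p-1}j}\Bigr)\Bigl(\sum_{F_2} b_{j i_p\cdots i_{p+q-2}}\Bigr).\]
I would then split into cases. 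If $r_1 \geq s$, the first bracket is an $r_1$-plane sum of $A$ and, by the preliminary observation, equals $n^{r_1-s}$ independently of $j$ and of the fixed coordinates; pulling it out leaves $\sum_j \sum_{F_2} b$, which is the sum of $B$ over the $(r_2+1)$-plane consisting of $F_2$ together with the first coordinate of $B$. Since $r_2+1 \geq t$, this equals $n^{r_2+1-t}$, and multiplying gives $n^{(r_1-s)+(r_2+1-t)} = n^{r-s-t+1}$, as required. The case $r_2 \geq t$ is completely symmetric, with the second bracket pulled out and $\sum_j\sum_{F_1} a$ evaluated as an $(r_1+1)$-plane sum of $A$.

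The main obstacle is the boundary where $r_1 = s-1$ and $r_2 = t-1$ occur simultaneously, so that neither of the above cases applies. A short arithmetic check shows this forces $s = p$, $t = q$, and $r = p+q-2$, meaning the only $r$-plane is the entire matrix $C$; in that degenerate regime both brackets genuinely depend on $j$ and the argument above breaks. I would handle this either by restricting attention to $r < p+q-2$ (so at least one coordinate is fixed and the split $(r_1,r_2)=(s-1,t-1)$ is never forced) or by treating $\Omega_p(p,n)$ and $\Omega_q(q,n)$ separately, observing that the polystochastic condition there reduces to total sum $1$ and so the claim becomes essentially a statement about $\sum_j A_j B_j$. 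For every other $(r_1,r_2)$ split permitted by the hypothesis, at least one of the two cases is available and the computation yields $\alpha = n^{r-s-t+1}$ uniformly, establishing the lemma.
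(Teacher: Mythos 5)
Your argument is essentially the paper's own proof: the same split of the free coordinates into an $A$-side and a $B$-side, the same observation that $r_1\ge s-1$ and $r_2\ge t-1$, and the same two cases according to which factor's plane-sum can be pulled out independently of the contracted index $j$ (the paper's cases are $k\ge s$ and $k=s-1$, where $k$ is your $r_1$; in the latter case all $q-1$ of the $B$-side coordinates are forced to be free, so it coincides with your case $r_2\ge t$). The boundary case you flag is a genuine observation rather than a defect of your write-up: the paper's second case silently evaluates a $(q-1)$-plane sum of $B$ as $n^{q-1-t}$, which requires $t\le q-1$, and when $s=p$ and $t=q$ the statement itself fails --- take $p=q=s=t=2$ and let $A=B$ be the order-$n$ matrix with a single $1$ in cell $(1,1)$; then $A\times B$ is the ordinary matrix product, which has total sum $1$ rather than the claimed $n^{-1}$. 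Consequently neither of your proposed repairs can succeed (the first changes the statement, the second asks you to prove something false), but the degenerate case is harmless in context, since the lemma is only ever applied with $s\le p-1$ and $t\le q-1$.
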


\begin{proof} Consider an $r$-plane sum in $C=A\times B$ with free indices $\{i_{l_1},\dots,i_{l_k}\}\subseteq\{i_1,\dots,i_{p-1}\}$ and $\{i_{l_{k+1}},\dots,i_{l_r}\}\subseteq \{i_p,\dots, i_{p+q-2}\}$. Since $r\ge q+s-2$, we must have $k\ge s-1$ as at most $q-1$ of the free indices can be from $\{i_p,\dots, i_{p+q-2}\}$. If $k\ge s$, \begin{align*}\sum_{i_{l_1},\dots, i_{l_r}}c_{i_1\cdots i_{p+q-2}}&=\sum_{i_{l_1},\dots, i_{l_r},j}a_{i_1\cdots i_{p-1}j}b_{ji_p\cdots i_{p+q-2}}\\&=n^{k-s}\sum_{i_{l_{k+1}},\dots,i_{l_r},j}b_{ji_p\cdots i_{p+q-2}}=n^{k-s}n^{r-k+1-t}=n^{r-s-t+1},\end{align*} noting that, since $k\le p-1$, we have $r-k+1\ge r-(p-1)+1\ge t$. Otherwise $k=s-1$, so 
\begin{align*}
\sum_{i_{l_1},\dots, i_{l_r}}c_{i_1\cdots i_{p+q-2}}&=\sum_{i_{l_1},\dots, i_{l_r},j}a_{i_1\cdots i_{p-1}j}b_{ji_p\cdots i_{p+q-2}}\\
&=n^{r-s-t+1}\sum_{i_{l_1},\dots,i_{l_k},j}a_{i_1\cdots i_{p-1}j}=n^{r-s-t+1}.\qedhere
\end{align*}
\end{proof}

Since the product of two permutation matrices is clearly a $(0,1)$-matrix, we get the following corollaries.

\begin{cor} 
Suppose $A\in\Lambda_1(p,n)$ and $B\in\Lambda_1(q,n)$. Then $A\times B\in \Lambda_1(p+q-2,n)$.
\end{cor}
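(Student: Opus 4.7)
The plan is to combine \lref{MultPoly} with the observation that the product $A\times B$ of two $1$-permutation matrices is a $(0,1)$-matrix. By \lref{MultPoly}, since $\Lambda_1(p,n)\subseteq\Omega_1(p,n)$ and $\Lambda_1(q,n)\subseteq\Omega_1(q,n)$, we immediately get $A\times B\in\Omega_1(p+q-2,n)$. So the product is already $1$-polystochastic, and it remains only to verify that every entry is in $\{0,1\}$, since a $(0,1)$-matrix whose every line sums to $1$ is precisely an element of $\Lambda_1(p+q-2,n)$.

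To see that $A\times B$ is a $(0,1)$-matrix, fix indices $i_1,\dots,i_{p+q-2}$ and consider
\[(A\times B)_{i_1\cdots i_{p+q-2}}=\sum_j a_{i_1\cdots i_{p-1}j}b_{ji_p\cdots i_{p+q-2}}.\]
Because $A\in\Lambda_1(p,n)$, the line of $A$ obtained by fixing its first $p-1$ coordinates to $i_1,\dots,i_{p-1}$ contains a single $1$, say at position $j=j^{*}$, and zeros elsewhere. Thus the sum collapses to $b_{j^{*}i_p\cdots i_{p+q-2}}\in\{0,1\}$, since $B\in\Lambda_1(q,n)$ is itself a $(0,1)$-matrix.

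There is no real obstacle here; the argument is essentially a bookkeeping check that the single nonzero term in the contraction sum is a $\{0,1\}$-valued entry of $B$. The only thing worth emphasising is that one must invoke \lref{MultPoly} to obtain the line-sum condition, rather than trying to verify it directly from scratch, since otherwise one would reprove the polystochastic calculation already carried out in that lemma.
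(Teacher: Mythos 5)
Your proposal is correct and follows the same route as the paper: the paper derives this corollary from \lref{MultPoly} together with the remark that the product of two permutation matrices is clearly a $(0,1)$-matrix. You have simply spelled out the "clearly" step (the contraction sum collapses to a single entry of $B$), which is a fine elaboration of the same argument.
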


\begin{cor}\label{MultOnes} 
Suppose $A\in\Lambda_s(p,n)$ and $B\in\Lambda_t(q,n)$. Then $A\times B\in \alpha\Lambda_r(p+q-2,n)$ for $r\ge\max\{p+t-2,q+s-2\}$ and $\alpha=n^{r-s-t+1}$.
\end{cor}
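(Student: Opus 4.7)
The plan is to combine the preceding Lemma with the observation that when $A$ and $B$ are permutation-like matrices, their product $A\times B$ is automatically a $(0,1)$-matrix. The first step is to note that $\Lambda_s(p,n)\subseteq\Omega_s(p,n)$ and $\Lambda_t(q,n)\subseteq\Omega_t(q,n)$, so the preceding Lemma immediately implies that every $r$-plane sum of $A\times B$ equals $\alpha=n^{r-s-t+1}$ whenever $r\ge\max\{p+t-2,q+s-2\}$. This gives the ``polystochastic'' half of the conclusion with no extra work.

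The second step is to verify the $(0,1)$ property that the authors flag as clear. The key elementary observation I would record is: for any $A\in\Lambda_s(p,n)$, every line of $A$ contains at most one $1$. Indeed, a line is a $1$-plane and can be enlarged to an $s$-plane by freeing any $s-1$ additional coordinates; since that $s$-plane contains exactly one $1$ by the defining property of $\Lambda_s$, the line it contains also has at most one $1$. Applying this to the specific line $\{a_{i_1\cdots i_{p-1}j}\}_j$, the sum
\[
(A\times B)_{i_1\cdots i_{p+q-2}}=\sum_j a_{i_1\cdots i_{p-1}j}\,b_{ji_p\cdots i_{p+q-2}}
\]
has at most one nonzero summand, and that summand is a product of two $(0,1)$-entries, hence lies in $\{0,1\}$. (For the statement it even suffices to use the observation on $A$ alone; the analogous fact for $B$ is not needed, although it would also hold.)

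Putting the two steps together, $A\times B$ is a $(0,1)$-matrix whose $r$-plane sums are all equal to $\alpha$, which is exactly the content of $A\times B\in\alpha\Lambda_r(p+q-2,n)$ in the sense used throughout this section. I do not anticipate a genuine obstacle: the preceding Lemma provides the arithmetic, and the $(0,1)$-claim rests only on the single embedding observation about lines in an $s$-permutation matrix. If anything is worth stating carefully, it is that embedding observation, since it is the one point where the generalisation from $s=t=1$ (where ``$A\times B$ is $(0,1)$'' is truly obvious from the definition of a line) to arbitrary $s,t\ge1$ requires justification.
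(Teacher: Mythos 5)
Your argument is correct and follows the same route as the paper, which simply combines the preceding lemma with the remark that the product of two permutation matrices is clearly a $(0,1)$-matrix; your observation that a line of an $s$-permutation matrix sits inside an $s$-plane and therefore carries at most one $1$ is exactly the justification the paper leaves implicit. Your parenthetical reading of the conclusion (a $(0,1)$-matrix whose $r$-plane sums all equal $\alpha$, rather than literally $\alpha$ times a member of $\Lambda_r$) is also the sense in which the corollary is actually applied, namely with $r=s+t-1$ so that $\alpha=1$.
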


We are now ready to generalise \lref{DG3} to higher odd dimensions.

\begin{thm} 
\cjref{DGconj} is false for all odd $d$ and all $n>2$.
\end{thm}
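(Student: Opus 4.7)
The strategy is to lift the three-dimensional counterexample from \lref{DG3} to arbitrary odd $d\ge 5$ by iterating the matrix product $\times$.  Let $A_n\in\Omega_1(3,n)$ be the matrix supplied by the proof of \lref{DG3}, so that $n^{-1}A_n\in\Hull(\Lambda_2(3,n))$ and $A_n\ne n^{-1}J_n^3$.  Set
\[M_d=\underbrace{A_n\times A_n\times\cdots\times A_n}_{d-2\text{ factors}},\]
which is well-defined by associativity of $\times$.  I will verify that (i) $M_d\in\Omega_1(d,n)$, (ii) $n^{2-d}M_d\in\Hull(\Lambda_{d-1}(d,n))$, and (iii) $M_d\ne n^{-1}J_n^d$.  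Granting these, \tref{JMinMax} yields $\Per\bigl((1-\epsilon)n^{-1}J_n^d+\epsilon M_d\bigr)<\Per(n^{-1}J_n^d)$ for sufficiently small $\epsilon>0$, so rescaling by $n^{2-d}$ places $(1-\epsilon)n^{1-d}J_n^d+\epsilon n^{2-d}M_d$ inside $\Hull(\Lambda_{d-1}(d,n))$ with permanent strictly below $(n!/n^n)^{d-1}$, contradicting \cjref{DGconj} in exactly the style of \lref{DG3}.

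Claim (i) is immediate from $d-3$ applications of \lref{MultPoly}.  For (ii), write $n^{-1}A_n=\sum_i c_iQ_i$ with $Q_i\in\Lambda_2(3,n)$ and $\sum_i c_i=1$, so that
\[M_d=n^{d-2}\sum_{i_1,\dots,i_{d-2}} c_{i_1}\cdots c_{i_{d-2}}\bigl(Q_{i_1}\times\cdots\times Q_{i_{d-2}}\bigr).\]
A short induction using \cyref{MultOnes} shows that $Q_{j_1}\times\cdots\times Q_{j_k}\in\Lambda_{k+1}(k+2,n)$ for every $k\ge 1$: appending a new factor from $\Lambda_2(3,n)$ raises the ambient dimension and the $s$-index each by exactly $1$, while the scale factor $\alpha=n^{r-s-t+1}$ collapses to $1$ at the minimum allowed $r$.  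Specialising to $k=d-2$ exhibits $n^{2-d}M_d$ as a convex combination of matrices in $\Lambda_{d-1}(d,n)$.

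The delicate step is (iii), which I handle in two cases.  When $n\ne 6$, the proof of \lref{DG3} lets us take $A_n=\corp(L_1)$ with $L_1$ having an orthogonal mate, so $A_n$ itself lies in $\Lambda_1(3,n)$; iterating the $1$-permutation case of \cyref{MultOnes} then yields $M_d\in\Lambda_1(d,n)$, which is a $(0,1)$-matrix and so certainly not constant equal to $n^{-1}$.  When $n=6$, the explicit $A_6$ of \lref{DG3} satisfies $(A_6)_{1,1,1}=1$, and the polystochastic line-sum constraint then forces $(A_6)_{1,1,j}=(A_6)_{j,1,1}=\delta_{j,1}$.  A one-line induction on $k$ gives $(A_6^{\times k})_{1,\dots,1,j}=\delta_{j,1}$ for all $k\ge 1$ (the inductive step is a single contraction that collapses on the index $l=1$), whence $(M_d)_{1,\dots,1}=1\ne 6^{-1}$.

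The principal obstacle is exactly (iii).  Expanding around the uniform matrix one sees $A_n\times A_n-n^{-1}J_n^4=\delta\times\delta$ with $\delta=A_n-n^{-1}J_n^3$, so non-triviality of $M_d$ does not follow formally from $A_n\ne n^{-1}J_n^3$; the tensor contraction $\delta\times\delta$ could conceivably vanish for some $\delta$.  Exploiting the concrete sparsity of the $A_n$ produced in \lref{DG3}---specifically, the presence of a $1$-entry that propagates through the iterated product---is what rescues the induction in both the $n\ne 6$ and $n=6$ cases.
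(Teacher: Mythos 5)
Your proposal is correct and takes essentially the same route as the paper: the paper also builds the counterexample by $\times$-multiplying the $d=3$ example from \lref{DG3} repeatedly (phrased as an induction appending one factor of $B\in\Omega_1(3,n)\cap n\Hull(\Lambda_2(3,n))$ at a time), using \lref{MultPoly} and \cyref{MultOnes} exactly as you do to stay $1$-polystochastic and inside $n^{d-2}\Hull(\Lambda_{d-1}(d,n))$. Your step (iii) is a worthwhile addition: the paper's induction does not explicitly carry the hypothesis $A\ne n^{-1}J_n^d$, and since $A\times B-n^{-1}J=(A-n^{-1}J)\times(B-n^{-1}J)$ this is not formally automatic, so your propagation of a $1$-entry through the iterated product (permutation matrices for $n\ne 6$, the $(1,1,1)$ entry of $A_6$ for $n=6$) closes a gap the paper leaves implicit.
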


\begin{proof} We use induction on $d$ to show that for any $d\ge3$ and $n\ge3$ there is $A\in\Omega_1(d,n)$ with $n^{2-d}A\in\Hull(\Lambda_{d-1}(d,n))$.  From the proof of Lemma \ref{DG3}, such an $A$ exists for $d=3$ and any $n\ge3$. Now suppose we have $A\in\Omega_1(d,n)$ with $n^{2-d}A\in\Hull(\Lambda_{d-1}(d,n))$, with decomposition $n^{2-d}A=\sum_i c_iP_i$, where $P_i\in\Lambda_{d-1}(d,n)$ for all $i$ and $\sum_i c_i=1$. Let $B\in\Omega_1(3,n)\cap n\Hull(\Lambda_2(3,n))$. By \lref{MultPoly}, $A\times B\in\Omega_1(d+1,n)$. Also 
\[A\times B=\left(\sum_in^{d-2}c_iP_i\right)\times \left(\sum_j nd_jQ_j\right)=\sum_{i,j}n^{d-1}c_id_jP_i\times Q_j,\] 
where $Q_j\in\Lambda_2(3,n)$ for all $j$ and $\sum_j d_j=1$. By Corollary \ref{MultOnes}, $P_i\times Q_j\in\Lambda_d(d+1,n)$ for every $i,j$. Lastly, $\sum_{i,j} c_id_j=1$ and so $n^{1-d}A\times B\in\Hull(\Lambda_d(d+1,n))$ as required. The result then follows from \tref{JMinMax} and the same argument as in \lref{DG3}.
\end{proof}

We have showed that \cjref{DGconj} fails in odd dimensions. The even
dimensional case seems harder.  As mentioned in the introduction,
Taranenko \cite{TaranenkoBig} found counterexamples to
\cjref{DGconjmod} for order 3. We would like to be able to use her examples
to show that Conjecture \ref{DGconj} fails for order 3, and possibly
further. In order to do so, we need a way to decompose permutation
matrices in $\Lambda_1(d,n)$ into diagonals in
$\Lambda_{d-1}(d,n)$. Unfortunately, it seems difficult to find such
decompositions in general.  Jurkat and Ryser \cite{JurkatRyser}
consider some conditions that make these decompositions possible. In
particular they show that finding a decomposition is equivalent to
extending a set of mutually orthogonal Latin hypercubes.

\subsection*{Acknowledgement} 
The authors thank Daniel Horsley for motivating discussions and Anna Taranenko
for helpful comments on a draft of this paper.

\end{document}